\documentclass[reqno]{amsart}
\usepackage[OT1]{fontenc}
\usepackage{amsmath,amssymb,mathtools,mathrsfs,booktabs}
\usepackage[hidelinks]{hyperref}
\numberwithin{equation}{section}
\newtheorem{theorem}{Theorem}[section]
\newtheorem{proposition}[theorem]{Proposition}
\newtheorem{lemma}[theorem]{Lemma}
\theoremstyle{definition}
\newtheorem{example}[theorem]{Example}
\theoremstyle{remark}
\newtheorem{remark}[theorem]{Remark}
\DeclareMathOperator{\Dom}{Dom}
\DeclareMathOperator{\Tr}{Tr}

\newcommand{\N}{\mathbb N}
\newcommand{\R}{\mathbb R}
\newcommand{\C}{\mathbb C}
\newcommand{\D}{\mathcal D}
\newcommand{\ds}{\mathcal N}
\newcommand{\TC}{\mathcal S_1}
\newcommand{\ip}[2]{\langle #1,#2\rangle}
\newcommand{\norm}[1]{\lVert #1\rVert}
\title{Unbounded Helson Forms with a Moment Representation}
\author{Tengfei Ma}
\address{School of Mathematical Sciences, Dalian University of Technology,
Dalian, Liaoning 116024, P. R. China}
\email{503304283@qq.com}
\author{Yufeng Lu}
\address{School of Mathematical Sciences, Dalian University of Technology,
Dalian, Liaoning 116024, P. R. China}
\email{lyfdlut@dlut.edu.cn}
\author[Chao Zu]{Chao Zu$^*$}
\address{School of Mathematical Sciences, Dalian University of Technology,
Dalian, Liaoning 116024, P. R. China}
\email{zuchao@dlut.edu.cn}
\thanks{$^*$Corresponding author.}
\date{}
\subjclass[2020]{Primary 47B35; Secondary 47A05, 44A10, 44A60}
\keywords{Helson matrix, positive form, closability, Dirichlet series, operator core}
\begin{document}
\begin{abstract}
We study the positive Helson form
$h_0[a,b]=\sum_{m,n\ge1}\alpha(mn)a_n\overline{b_m}$
on the finitely supported sequences $c_{00}\subset\ell^2(\N)$,
where $\alpha(n)=\int_{\R}n^{-s}\,d\mu(s)<\infty$ for a finite
positive Borel measure $\mu$. Under the Carleman condition
$\sum_{k\ge1}\alpha(4^k)^{-1/(2k)}=\infty$, closability of $h_0$
is equivalent to $\sqrt n\,\alpha(n)\to0$, and hence to
$\mu((-\infty,1/2])=0$. For measures concentrated on
$(1/2,\infty)$, we identify the closure of the operator
$N_0a=\sum_{n\ge1}a_nn^{-s}$, defined on $c_{00}$,
with its maximal realization from $\ell^2(\N)$ to $L^2(\mu)$.
We further prove that the sequences $a\in c_{00}$ satisfying
$\sum_{n\ge1}a_n/\sqrt n=0$ form an operator core for the
positive self-adjoint operator associated with the closure of $h_0$.
\end{abstract}
\maketitle
\enlargethispage{4pt} % Allow for the corresponding-author footnote.

\section{Introduction}\label{sec:introduction}

Let $M$ be a finite Borel measure on $\R$ with finite moments of all orders,
and put
\[
 m_j=\int_{\R}x^j\,dM(x),\qquad j\ge0.
\]
The corresponding Hankel form is
\[
 Q[a,b]=\sum_{j,k\ge0}m_{j+k}a_k\overline{b_j},
 \qquad a,b\in c_{00}(\mathbb N_0),
\]
where $\mathbb N_0=\{0,1,\ldots\}$ and $c_{00}(\mathbb N_0)$ denotes the
finitely supported sequences.
Widom \cite[Theorems~3.1 and~3.2]{W66} characterized the boundedness and
compactness of positive Hankel forms in terms of their moments:
boundedness is equivalent to $m_n=O(n^{-1})$, and compactness to
$m_n=o(n^{-1})$. 

The study of unbounded positive Hankel operators through
moment representations and closable quadratic forms was initiated by
Yafaev \cite{Y16}. He investigated the relation between closability and
the representing measure and described the domains of the closed forms,
with the closability criterion subsequently corrected under additional
growth assumptions in \cite{Y19}. Berg and Szwarc established closability
for further classes of moment sequences in \cite{BS20} and studied the
domains and operator cores of the associated positive self-adjoint Hankel
operators in \cite{BS22}.
Related questions concerning operator domains and cores have also
been studied for Hankel operators with $H^2$ symbols
\cite[Theorem~1.1]{GP23} and for integral Hankel operators
\cite{PT25}.

The aim of the present paper is to develop the corresponding theory for Helson
forms, also called multiplicative Hankel forms. These forms were studied by
Helson \cite{H06,H10} in connection with Hankel forms on the
infinite-dimensional torus. An important example is the Helson form
\[
 \sum_{m,n\ge2}
 \frac{a_n\overline{b_m}}{\sqrt{mn}\log(mn)},
 \qquad a,b\in c_{00}(\{2,3,\ldots\}),
\]
associated with the multiplicative Hilbert matrix.
Its norm is $\pi$, and its spectrum is $[0,\pi]$ with no eigenvalues
\cite[Theorem~1]{BPSSV16}. The spectrum was subsequently shown to be
purely absolutely continuous of multiplicity one
\cite[Theorem~1.1]{PP18b}.

Let $\mu$ be a finite positive Borel measure on
$\R$ such that
\begin{equation}\label{eq:moments}
 \alpha(n)=\int_{\R}n^{-s}\,d\mu(s)<\infty,\qquad n\in\N=\{1,2,\ldots\}.
\end{equation}
The associated Helson form is
\begin{equation}\label{eq:initial-form}
 h_0[a,b]=\sum_{m,n\ge1}\alpha(mn)a_n\overline{b_m},
 \qquad a,b\in c_{00},
\end{equation}
where $c_{00}$ denotes the finitely supported sequences in $\ell^2(\N)$.
Such moment representations were studied by Perfekt and Pushnitski
\cite[Section~5.1]{PP18a}, who characterized the boundedness of the corresponding
positive Helson forms and considered moment representations in infinitely many
variables.

To study the closure of $h_0$, we introduce the operator
\[
 N_0:\ell^2(\N)\supset c_{00}\longrightarrow L^2(\mu),\qquad
 (N_0a)(s)=\sum_{n\ge1}a_nn^{-s}.
\]
The moment assumption ensures that $N_0$ is well defined, and
\[
 h_0[a,b]=\ip{N_0a}{N_0b}_{L^2(\mu)},\qquad a,b\in c_{00}.
\]
In particular, $h_0$ is positive.

For $a\in\ell^2(\N)$, consider the Dirichlet series
\begin{equation}\label{eq:expression}
 (\ds a)(s)=\sum_{n\ge1}a_nn^{-s},
\end{equation}
which belongs to the Hilbert space of Dirichlet series studied
in \cite{HLS97}. It converges absolutely for $s>1/2$, since
\begin{equation}\label{eq:evaluation}
 \sum_{n\ge1}|a_n|n^{-s}
 \le\norm a_{\ell^2}\,\zeta(2s)^{1/2}.
\end{equation}
The point $s=1/2$ is therefore the natural boundary for bounded evaluation of
these series: the sequence $(n^{-s})_{n\ge1}$ belongs to $\ell^2(\N)$ precisely
when $s>1/2$.

Our first main theorem characterizes the closability of $h_0$ under a Carleman
condition in terms of the support of the representing measure $\mu$.

\begin{theorem}\label{thm:support}
Assume \eqref{eq:moments} is satisfied. 
Then the following conditions are equivalent:
\begin{enumerate}
\item\label{item:support-closable} the form $h_0$ defined on $c_{00}$ is closable in $\ell^2(\N)$ and 
\begin{equation}\label{eq:carleman}
 \sum_{k=1}^{\infty}\alpha(4^k)^{-1/(2k)}=\infty;
\end{equation}
\item\label{item:support-decay} $\sqrt n\,\alpha(n)\to0$ as $n\to\infty$;
\item\label{item:support-measure} $\mu(( -\infty,1/2])=0$.
\end{enumerate}
\end{theorem}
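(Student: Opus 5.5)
We prove the cycle (3)$\Rightarrow$(2)$\Rightarrow$(3), then (3)$\Rightarrow$(1) and (1)$\Rightarrow$(3).

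\emph{(2)$\Leftrightarrow$(3).} Write $\sqrt n\,\alpha(n)=\int n^{1/2-s}\,d\mu(s)$. If $\mu((-\infty,1/2])=0$, then the integrand is bounded by $1$ and tends to $0$ pointwise on $(1/2,\infty)$. Dominated convergence gives (2). Conversely, if $\mu((-\infty,1/2])>0$, then
\[
 \sqrt n\,\alpha(n)\ge\int_{(-\infty,1/2]}n^{1/2-s}\,d\mu\ge\mu((-\infty,1/2])>0,
\]
so (2) fails.

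\emph{(3)$\Rightarrow$(1).} The Carleman condition is automatic here. Since $s>1/2$, we have $\alpha(4^k)\le\int 4^{-k/2}\,d\mu=\mu(\R)2^{-k}$. Hence $\alpha(4^k)^{-1/(2k)}\ge\mu(\R)^{-1/(2k)}2^{1/2}$, which does not tend to $0$, so the series in \eqref{eq:carleman} diverges. For closability it suffices to show that $N_0$ is closable. Suppose $a^{(j)}\to0$ in $\ell^2$ and $N_0a^{(j)}\to f$ in $L^2(\mu)$. By \eqref{eq:evaluation}, $(N_0a^{(j)})(s)\to0$ for every $s>1/2$. A subsequence converges $\mu$-a.e.\ to $f$, and $\mu$ lives on $(1/2,\infty)$, so $f=0$. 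Therefore $N_0$ is closable, and $h_0[a]=\norm{N_0a}^2$ is closable.

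\emph{(1)$\Rightarrow$(3).} This is the main step. Assume closability and \eqref{eq:carleman}, and suppose $\mu((-\infty,1/2])>0$. The plan is to construct $a^{(j)}\in c_{00}$ with $\norm{a^{(j)}}_{\ell^2}\to0$ and $h_0[a^{(j)}-a^{(i)}]\to0$, but $h_0[a^{(j)}]\not\to0$. This contradicts closability of the form, equivalently closability of $N_0$.

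A natural choice is
\[
 a^{(j)}_n=\frac{1}{H_j\sqrt n}\quad\text{for }n\le N_j,\qquad H_j=\sum_{n\le N_j}\frac1n\sim\log N_j.
\]
Then $\norm{a^{(j)}}^2=1/H_j\to0$, and $(N_0a^{(j)})(s)=H_j^{-1}\sum_{n\le N_j}n^{-1/2-s}$. This tends to $1$ at $s=1/2$, to $0$ for $s>1/2$, and blows up for $s<1/2$. So $\mu$ must be localized first.

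The key idea is to use that $N_0$ intertwines multiplication: $N_0(a*b)=N_0a\cdot N_0b$ (Dirichlet convolution). The closure of $N_0$ then contains the graphs of products $p\cdot N_0a$ for Dirichlet polynomials $p$. We then use density of Dirichlet polynomials in $s$ restricted to the span of the monomials $4^{-ks}=e^{-k\log4\,s}$. After the substitution $x=4^{-s}$ these become ordinary polynomials in $x$, and the Carleman condition \eqref{eq:carleman} on the moments $\alpha(4^k)=\int x^k\,d\mu$ is exactly what gives density of polynomials in $L^2$ of the pushforward measure (Carleman's theorem on determinacy and density). With this density, one can multiply the test sequences above by polynomial weights approximating $\chi_{(-\infty,1/2]}$ times a damping factor, or directly approximate any $f\in L^2(\mu)$ supported on $(-\infty,1/2]$.

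The desired conclusion is that the closure of the graph of $N_0$ contains $(0,f)$ with $f\ne0$, a nonzero element in the multivalued part of the closure. Concretely, we take $a^{(j)}$ as a Dirichlet convolution of the harmonic-type sequence above (which gives $(0,\chi_{\{1/2\}})$-type limits, or after rescaling $n^{-\sigma}$-weights, limits concentrated at $s\le1/2$) with the polynomial in $4^{-s}$. We then check the $\ell^2$ norms via the multiplicativity bound $\norm{a*b}_{\ell^2}\le\norm{b}_{\ell^1}\norm a_{\ell^2}$ for finitely supported $b$.

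We expect the hard part to be controlling the $L^2(\mu)$ convergence on $s<1/2$, where the test functions grow. This requires the Carleman density argument to truncate $\mu$ to a region where the approximations are bounded.
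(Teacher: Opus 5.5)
Your arguments for (2)$\Leftrightarrow$(3) and (3)$\Rightarrow$(1) are correct and coincide with the paper's: dominated convergence; the bound \eqref{eq:evaluation} together with an a.e.\ convergent subsequence; and $\alpha(4^k)\le\mu(\R)2^{-k}$.

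The gap is the implication (1)$\Rightarrow$(3). You correctly identify it as the main step but only sketch it. No sequence $a^{(j)}$ is actually constructed, and the proposed mechanism does not work as stated.

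\emph{Multiplicativity.} Stability of the graph closure of $N_0$ under multiplication by a Dirichlet polynomial $p(s)=\sum_n b_nn^{-s}$ requires $p\,N_0a^{(j)}\to p\,g$ in $L^2(\mu)$. This fails when the support of $\mu$ is unbounded below, because multiplication by a non-constant $p$ is then unbounded on $L^2(\mu)$.

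\emph{Density.} Carleman's condition gives density of polynomials in $4^{-s}$ in $L^2(\mu)$. It gives no control of the $\ell^2$ norms of the coefficient sequences, and non-closability is exactly a statement about the interplay of these two norms. Such polynomials are also dense when (3) holds, and then $N_0$ is closable.

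\emph{Blow-up below $1/2$.} You leave open the blow-up of your test functions on $s<1/2$, but this is the whole difficulty as soon as $\mu((-\infty,1/2))>0$. The harmonic sequence only disposes of the special case of an atom at $1/2$ with no mass below it. Example~\ref{ex:counter} shows that the Carleman condition must enter essentially, and your sketch never pins down where it does.

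The paper avoids constructing sequences and argues by duality, in two steps.
\begin{enumerate}
\item Restricting $N_0$ to sequences supported on $\{2^k\}$ gives a Hankel moment operator for the image of $\mu$ under $s\mapsto2^{-s}$, whose even moments are $\alpha(4^k)$. Closability together with \eqref{eq:carleman} and Yafaev's theorem (Lemmas~\ref{lem:hankel-support} and~\ref{lem:dyadic}) then gives $\mu((-\infty,0])=0$.
\item For each $f\in\Dom(N_0^*)$, the samples $F(\log n)=(N_0^*f)_n$ of the Laplace transform of $f\,d\mu$ are square summable. The Euler-summation estimate of Lemma~\ref{lem:sampling} upgrades this to $e^{t/2}F\in L^2(0,\infty)$. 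Hence the Stieltjes transform $\int f(s)\,d\mu(s)/(z+s)$ continues holomorphically to $\operatorname{Re}z>-1/2$, which forces $f=0$ $\mu$-a.e.\ on $(0,1/2)$. The atom at $1/2$ is handled by noting that the limit $e^{t/2}F(t)\to\int_{\{1/2\}}f\,d\mu$ must vanish. Density of $\Dom(N_0^*)$ then gives $\mu((0,1/2])=0$.
\end{enumerate}
To complete your proof you would need these two ingredients, or genuine substitutes for them.
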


The Carleman's condition is needed only for the implication from closability to
the support restriction. In fact, closability alone does not force the measure to
be concentrated on $(1/2,\infty)$; an example is given in Section~\ref{subsec:support}.
Theorem~\ref{thm:support} remains valid if the Carleman condition
\eqref{eq:carleman} is replaced by
\[
 \sum_{k=1}^{\infty}\alpha(b^{2k})^{-1/(2k)}=\infty,
\]
where $b\ge2$ is any fixed integer. Each of these conditions implies
\eqref{eq:carleman}, so $b=2$ gives the weakest assumption in this family;
see Remark~\ref{rem:bases}.

The proof of  (\ref{item:support-closable})$\Rightarrow$(\ref{item:support-measure}) begins by restricting $h_0$ to sequences
supported on $\{2^k:k\ge0\}$. The resulting Hankel moment form satisfies
Carleman's condition, so the classical closability theorem gives
$\mu((-\infty,0])=0$. To obtain the sharper restriction at $1/2$, we use the
square summability of the Laplace transform of $f\,d\mu$ at the points $\log n$,
for $f\in\Dom(N_0^*)$. The Euler summation formula gives a weighted $L^2$ estimate for
this transform, which permits analytic continuation of the corresponding
Cauchy transform and shows that $f=0$ $\mu$-almost everywhere on $(0,1/2]$.
Density of $\Dom(N_0^*)$ in $L^2(\mu)$ then yields $\mu((-\infty,1/2])=0$.

Assume now that $\mu((-\infty,1/2])=0$. Since \eqref{eq:expression} is then defined
$\mu$-almost everywhere for every $a\in\ell^2$, we introduce the maximal realization
\begin{equation*}
 \Dom(N_{\max})=\{a\in\ell^2:\ds a\in L^2(\mu)\},\qquad N_{\max}a=\ds a.
\end{equation*}

Our second main theorem identifies $\overline{N_0}$ with $N_{\max}$ and describes
the closure of the Helson form $h_0$.

\begin{theorem}\label{thm:synthesis}
Let $\mu$ be a finite positive measure satisfying
\begin{equation}\label{eq:concentration}
 \mu((-\infty,1/2])=0.
\end{equation}
Then
\[
 \overline{N_0}=N_{\max}.
\]
Consequently the closure $h$ of $h_0$ is given by
\[
h[a,b]=\ip{N_{\max}a}{N_{\max}b}_{L^2(\mu)},\qquad  \Dom(h)=\Dom(N_{\max}) .
\]
\end{theorem}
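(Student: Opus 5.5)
We prove $\overline{N_0}=N_{\max}$ by showing two things: $N_{\max}$ is a closed extension of $N_0$, and $c_{00}$ is a core for $N_{\max}$. The statement about $h$ then follows from the standard fact that the closure of the form $\ip{N_0a}{N_0b}$ is $\ip{\overline{N_0}a}{\overline{N_0}b}$ with domain $\Dom(\overline{N_0})$.

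\textbf{Closedness.} Suppose $a^{(k)}\to a$ in $\ell^2$ and $\ds a^{(k)}\to g$ in $L^2(\mu)$. By \eqref{eq:evaluation}, $(\ds a^{(k)})(s)\to(\ds a)(s)$ pointwise for every $s>1/2$. Passing to a subsequence converging $\mu$-a.e., we get $g=\ds a$ $\mu$-a.e., using \eqref{eq:concentration}. Hence $N_{\max}$ is closed. Since $N_{\max}$ clearly extends $N_0$, we obtain $\overline{N_0}\subset N_{\max}$.

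\textbf{Core property.} Fix $a\in\Dom(N_{\max})$. We need $a^{(k)}\in c_{00}$ with $a^{(k)}\to a$ and $\ds a^{(k)}\to\ds a$ in $L^2(\mu)$.

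Plain truncation fails. Its tail $\sum_{n>K}a_nn^{-s}$ is only controlled by $\zeta(2s)^{1/2}$-type bounds, and these blow up as $s\downarrow1/2$, where $\mu$ may have mass.

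Instead, use a two-step approximation.

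1. Replace $a$ by a damped sequence $a^{\varepsilon}_n=a_nn^{-\varepsilon}$, so that $(\ds a^{\varepsilon})(s)=(\ds a)(s+\varepsilon)$. Then $a^\varepsilon\to a$ in $\ell^2$ by dominated convergence.
2. Each $a^{\varepsilon}$ can be truncated. For $s>1/2$ the tail satisfies
\[
\Bigl|\sum_{n>K}a_nn^{-s-\varepsilon}\Bigr|\le \Bigl(\sum_{n>K}|a_n|^2\Bigr)^{1/2}\zeta(1+2\varepsilon)^{1/2},
\]
which is uniform in $s$. Since $\mu$ is finite, truncations of $a^\varepsilon$ converge in $L^2(\mu)$. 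So it suffices to show $\ds a^\varepsilon\to\ds a$ in $L^2(\mu)$ as $\varepsilon\to0$.

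Pointwise convergence holds, because the series converges absolutely on $(1/2,\infty)$. What remains is a domination or uniform integrability bound.

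\textbf{Main obstacle: the domination.} One needs a bound like
\[
|F(s+\varepsilon)|\lesssim \text{(maximal function of }F\text{ at }s)\in L^2(\mu),\qquad F=\ds a.
\]
Shifting the variable is not monotone in general, and $\mu$ is arbitrary. This is the hard step.

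The approach I would try first is a general averaging rather than a shift. Write
\[
(\ds a)(s+t)=\int (\ds a)(s+iy)\,P_t(y)\,dy
\]
using a vertical Poisson kernel, and exploit multiplicativity. Better still, replace damping by a convex combination of dilation-free multipliers $n^{-\varepsilon}$ represented as an average of the characters $n^{-iy}$. Then
\[
\norm{\ds a^\varepsilon}_{L^2(\mu)}^2\le\int\!\!\int|F(s+iy)|^2\,d\mu\,dP_\varepsilon(y).
\]
This still requires control off the real axis, which is not given.

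A more robust alternative is to avoid explicit approximation and argue by duality. Show $\overline{N_0}=N_{\max}$ by proving that any $(b,g)\in\ell^2\oplus L^2(\mu)$ orthogonal to the graph of $N_{\max}$ vanishes whenever it is orthogonal to the graph of $N_0$. Orthogonality to the graph of $N_0$ gives
\[
b_n=-\int n^{-s}\,\overline{g(s)}\,d\mu(s)\quad\text{for all }n,
\]
so $b=-N_0^*g$ with $\int n^{-s}\bar g\,d\mu$ square summable. We must then show
\[
\ip{a}{b}+\ip{\ds a}{g}=0\qquad\text{for all }a\in\Dom(N_{\max}),
\]
which amounts to justifying the interchange $\sum_n a_n\int n^{-s}\bar g\,d\mu=\int(\ds a)\bar g\,d\mu$. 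I would prove this interchange via the damping $a^\varepsilon$. For $a^\varepsilon$ the interchange holds by Fubini, since the uniform bound above gives absolute convergence. Letting $\varepsilon\to0$, the left side converges because $a^\varepsilon\to a$ in $\ell^2$. For the right side, only a weak-type limit of $\int F(s+\varepsilon)\bar g\,d\mu$ is needed, which may be easier via dominated convergence if $|F(s+\varepsilon)|$ can be bounded, for instance by splitting $\mu$ into $s\ge1$ (where everything converges uniformly) and $(1/2,1)$. I expect this final limit to be the crux.
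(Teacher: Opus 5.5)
\textbf{There is a genuine gap: the key identity is never proved.} Your closedness argument and the inclusion $\overline{N_0}\subset N_{\max}$ are correct. Your duality reformulation is also the right reduction; it is the paper's reduction to $N_{\max}\subset N_0^{**}$. Everything then hinges on the identity $\ip{N_{\max}a}{g}_{L^2(\mu)}=\ip{a}{N_0^*g}_{\ell^2}$ for $a\in\Dom(N_{\max})$ and $g\in\Dom(N_0^*)$, and the damping route you propose does not supply it.

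After replacing $a$ by $a^\varepsilon$, Fubini and the $\ell^2$-side limit are harmless. The $L^2(\mu)$-side limit $\int F(s+\varepsilon)\overline{g(s)}\,d\mu(s)\to\int F\overline{g}\,d\mu$ is the problem: it needs information about $F=\ds a$ at the shifted points $s+\varepsilon$, which lie off the support of $\mu$ in general. All you know there is $|F(s+\varepsilon)|\le\norm a\,\zeta(2s)^{1/2}$. The product $\zeta(2s)^{1/2}|g(s)|$ need not be $\mu$-integrable near $1/2$, even for $g\in\Dom(N_0^*)$. For example, take $g=1$ and $d\mu=\rho(s-1/2)\,ds$ with $\rho(x)=x^{-1/2}/\log(1/x)$ near $0$; Hilbert's inequality gives $Lg\in L^2(0,\infty)$, yet $\int\zeta(2s)^{1/2}\,d\mu=\infty$.

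Splitting at $s=1$ leaves $(1/2,1)$, which is exactly where the difficulty sits. For the same reason your first (core) approach may genuinely fail: nothing keeps $\int|F(s+\varepsilon)|^2\,d\mu(s)$ bounded as $\varepsilon\to0$ if $F$ is small on the support of $\mu$ near $1/2$ and large nearby. Note also that your argument uses $g\in\Dom(N_0^*)$ only to make $\sum_n a_n\overline{(N_0^*g)_n}$ finite. That hypothesis has to enter quantitatively.

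\textbf{The missing idea is to regularize $g$ rather than $a$.} Put $q_N(s)=1-e^{-N(s-1/2)}$. Then $\int q_N^2\zeta(2s)\,d\mu\le(1+N/2)\mu(\R)$. Hence for every $a\in\ell^2$ the interchange holds absolutely with $q_Ng$ in place of $g$, and $q_Ng\in\Dom(N_0^*)$. The $L^2(\mu)$-side limit $\ip{N_{\max}a}{q_Ng}\to\ip{N_{\max}a}{g}$ is then just dominated convergence.

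The nontrivial limit is $N_0^*(q_Ng)\to N_0^*g$ in $\ell^2$, and this is where $g\in\Dom(N_0^*)$ is really used. By the Euler-summation comparison (Lemma~\ref{lem:sampling}), the graph norm of $N_0^*$ is equivalent to $\bigl(\norm{h}_{L^2(\mu)}^2+\norm{Lh}_{L^2(0,\infty)}^2\bigr)^{1/2}$, where $(Lh)(t)=\int e^{-t(s-1/2)}h(s)\,d\mu(s)$. Since $g-q_Ng=e^{-N(\cdot-1/2)}g$, the translation identity $L\bigl(e^{-N(\cdot-1/2)}g\bigr)=(Lg)(\cdot+N)$ gives $\norm{L(g-q_Ng)}^2=\int_N^\infty|Lg(t)|^2\,dt\to0$. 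Without this tail estimate, or an equivalent one, your proposal stops short of a proof.
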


The main step in the proof of Theorem~\ref{thm:synthesis} is to identify the
closure of $N_0$ with its maximal realization. Since
$\Dom(h)=\Dom(\overline{N_0})$ and $\overline{N_0}=N_0^{**}$, it suffices to prove
\[
 N_0^{**}=N_{\max}.
\]
The essential step is the inclusion $N_{\max}\subset N_0^{**}$, which amounts to
verifying
\[
 \ip{N_{\max}a}{f}_{L^2(\mu)}=\ip{a}{N_0^*f}_{\ell^2}
\]
for every $a\in\Dom(N_{\max})$ and $f\in\Dom(N_0^*)$. Written explicitly,
this identity becomes
\[
 \int_{1/2}^{\infty}
 \left(\sum_{n=1}^{\infty}a_nn^{-s}\right)\overline{f(s)}\,d\mu(s)
 =\sum_{n=1}^{\infty}a_n
 \left(\int_{1/2}^{\infty}n^{-s}\overline{f(s)}\,d\mu(s)\right).
\]
Absolute convergence of the Dirichlet series at each $s>1/2$ does not by itself
justify interchanging the sum and the integral. For Hankel moment forms, Yafaev's proof in \cite{Y16} relies on
a maximal realization theorem for the Laplace transform established
in \cite[Theorem~3.9]{Y15}. Berg and Szwarc subsequently gave a
moment-theoretic proof \cite[proof of Theorem~1.2, pp.~5--7]{BS22}. In the present setting, we use a different argument based on an exponential
cutoff approximation. More precisely, we introduce
\[
q_N(s)=1-e^{-N(s-1/2)}
\]
and first establish the required identity for the truncated functions
\(q_Nf\). The desired formula is then obtained by passing to the limit in
the graph norm of \(N_0^*\), where the convergence follows from the
translation property of the associated Laplace transform.
For completeness, Appendix~\ref{app:hankel-maximal} presents an alternative
proof of the corresponding maximal realization result for Hankel moment
operators. The proof is based on the same cutoff approximation principle.

Let $H_\mu$ denote the positive self-adjoint operator associated with the closed
form $h$, and put
\begin{equation*}
 \D_0=\left\{a\in c_{00}:\sum_{n\ge1}\frac{a_n}{\sqrt n}=0\right\}.
\end{equation*}

Our third main theorem establishes $\D_0$ as an operator core for $H_\mu$.

\begin{theorem}\label{thm:core-main}
Under the assumptions of Theorem~\ref{thm:synthesis}, $\D_0$ is an operator core
for $H_\mu$.
\end{theorem}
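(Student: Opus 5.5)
The plan is to combine two facts: $H_\mu=N_{\max}^*N_{\max}$ by Theorem~\ref{thm:synthesis}, and $\D_0\subset\Dom(H_\mu)$, which I prove first. It then suffices to show that $(H_\mu+1)\D_0$ is dense in $\ell^2(\N)$. Equivalently, if $u\in\ell^2$ satisfies $\ip{(H_\mu+1)a}{u}=0$ for all $a\in\D_0$, then $u=0$.

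\emph{Step 1: $\D_0\subset\Dom(H_\mu)$ and an explicit formula.}
For $a\in c_{00}$ put $g=N_0a$. If $a\in\D_0$, then $g(1/2)=\sum_n a_n n^{-1/2}=0$. Since $g$ is a finite Dirichlet polynomial, this gives $|g(s)|\le C_a\min(s-1/2,1)$ on $(1/2,\infty)$. Define the candidate $(Ta)_m=\int m^{-s}g(s)\,d\mu(s)=\sum_n\alpha(mn)a_n$. By Minkowski's inequality and \eqref{eq:evaluation},
\[
\norm{Ta}_{\ell^2}\le\int|g(s)|\,\zeta(2s)^{1/2}\,d\mu(s)\le C\int\min(s-1/2,1)\,(s-1/2)^{-1/2}\,d\mu(s)<\infty .
\]
The same absolute bound justifies Fubini in
\[
\ip{N_{\max}b}{N_0a}_{L^2(\mu)}=\ip{b}{Ta}_{\ell^2},\qquad b\in\Dom(N_{\max}),
\]
because $\sum_n|b_n|n^{-s}\le\norm b\,\zeta(2s)^{1/2}$. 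Hence $N_0a\in\Dom(N_{\max}^*)$, so $a\in\Dom(H_\mu)$ and $H_\mu a=Ta$.

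\emph{Step 2: reduction to a scalar relation.}
Let $u\perp(H_\mu+1)\D_0$. Test with $a=\sqrt m\,e_m-e_1\in\D_0$. The orthogonality shows that
\[
\sqrt m\,\bigl(\overline{(Tu)_m}+\overline{u_m}\bigr)\quad\text{is independent of } m,
\]
where $(Tu)_m:=\int m^{-s}F(s)\,d\mu(s)$ and $F=\ds u$. Here $F$ is finite $\mu$-a.e.\ because $\mu$ lives on $(1/2,\infty)$, and the integral is absolutely convergent: $|F(s)|\le\norm u\,\zeta(2s)^{1/2}$, and $m^{-s}\zeta(2s)^{1/2}$ is $\mu$-integrable for each fixed $m$ only after the cutoff argument below. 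So we obtain a constant $c$ with
\[
(Tu)_m+u_m=c\,m^{-1/2}\qquad\text{for all } m.
\]
Now split into cases.
\begin{itemize}
\item If $c=0$, then $u+Tu=0$. Pairing against $b\in\Dom(N_{\max})$, as in Step 1, shows that $u\in\Dom(N_{\max}^*N_{\max})$ with $(H_\mu+1)u=0$, hence $u=0$.
\item If $c\neq0$, then $Tu=c\,m^{-1/2}-u$. The right-hand side differs from an $\ell^2$ sequence by $c(m^{-1/2})_m\notin\ell^2$. So it suffices to show that $Tu\in\ell^2$, or at least that $\sum_m|(Tu)_m|^2/m^{\varepsilon}$-type tests contradict the divergence $\sum_{m\le K}1/m\sim\log K$.
\end{itemize}

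\emph{Step 3 (main obstacle): controlling $Tu$ for a general $u\in\ell^2$ that is only orthogonal to $(H_\mu+1)\D_0$.*}
Here I would use the cutoff $q_N(s)=1-e^{-N(s-1/2)}$ from the proof of Theorem~\ref{thm:synthesis}. Replacing $F$ by $q_NF$ kills the behaviour at $s=1/2$, exactly as the condition $g(1/2)=0$ did in Step 1. This gives $T_Nu:=\bigl(\int m^{-s}q_NF\,d\mu\bigr)_m\in\ell^2$ and legitimizes Fubini. The defect $\bigl(\int m^{-s}(1-q_N)F\,d\mu\bigr)_m$ is then estimated through the translation property of the Laplace transform $m\mapsto\int m^{-s}e^{-N(s-1/2)}F\,d\mu$.

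The plan is to pair the relation $(Tu)_m+u_m=c\,m^{-1/2}$ with the approximating sequences
\[
b^{(K)}_n=\frac{n^{-1/2}}{L_K}\mathbf 1_{n\le K},\qquad L_K=\sum_{n\le K}\frac1n .
\]
These satisfy $\sum_n b^{(K)}_n n^{-1/2}=1$ and $\norm{b^{(K)}}=L_K^{-1/2}\to0$. Moreover $N_0b^{(K)}\to0$ in $L^2(\mu)$, by dominated convergence, since $|N_0b^{(K)}|\le1$ pointwise on $(1/2,\infty)$ and $N_0b^{(K)}\to0$ there. Pairing yields
\[
\ip{b^{(K)}}{u+Tu}=\bar c\cdot 1\ \text{(up to conjugation)},
\]
while the left-hand side equals $\ip{b^{(K)}}{u}+\ip{N_0b^{(K)}}{F}_{L^2(\mu)}$, which tends to $0$ provided $F\in L^2(\mu)$. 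That forces $c=0$.

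So the crux is to show $F=\ds u\in L^2(\mu)$, i.e.\ $u\in\Dom(N_{\max})$. I would try to obtain this from the relation $Tu=c\,m^{-1/2}-u$ combined with the $q_N$-truncation, bounding $\int q_N|F|^2\,d\mu$ uniformly in $N$ via $\ip{u}{T_Nu}$. I expect this uniform bound, rather than the algebra above, to be the genuinely delicate point.
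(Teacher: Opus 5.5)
Your proposal has a genuine gap, and it sits where you suspect. Step~1 is correct: it is the content of the paper's Lemma~\ref{lem:core-action}, stated for all of $\D_0$ at once. Your sequences $b^{(K)}$ also show, in effect, that $\D_0$ is a \emph{form} core for $h$. The problem is that both branches of your case split on $c$ presuppose $\ds u\in L^2(\mu)$, i.e.\ $u\in\Dom(N_{\max})$, for an arbitrary $u\perp(H_\mu+1)\D_0$. Once that is known, $u=0$ follows in one line from the form-core property. So this unproved claim is not a technical detail; it is the entire content of the theorem.

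Moreover, Step~2 is not well defined as written. The orthogonality relations are exactly
\[
\sqrt m\,u_m=u_1+\int_{1/2}^\infty q_{\log m}(s)\,(\ds u)(s)\,d\mu(s),\qquad m\ge2,
\]
and you split the right-hand side into $\int\ds u\,d\mu$ and $-\sqrt m\int m^{-s}\ds u\,d\mu$. For general $u\in\ell^2$ neither piece need converge. The general bound is only $|(\ds u)(s)|\le\norm u\,\zeta(2s)^{1/2}\asymp(2s-1)^{-1/2}$ near $s=1/2$. For instance, $\int\ds g_d\,d\mu_c=\infty$ for the data of Example~\ref{ex:gamma} with $c<1/2$ and $1/2<d\le1-c$. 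Thus your constant $c$ and the sequence $Tu$ exist only once the crux is known; integrability cannot be supplied ``after the cutoff argument''.

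Your proposed remedy, bounding $\int q_N|\ds u|^2\,d\mu=\ip{\mathcal H_{\mu_N}u}{u}$ uniformly in $N$, has no input behind it. The relations above only give the \emph{first coordinate} of $\mathcal H_{\mu_t}u$ at the points $t=\log m$, and you offer no mechanism for controlling the whole vector $\mathcal H_{\mu_t}u$.

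The missing idea is to regard $\phi(t)=u_1+(\mathcal H_{\mu_t}u)_1=u_1+\int q_t\,\ds u\,d\mu$ as a function of the continuous variable $t>0$.
\begin{itemize}
\item The relations say $\phi(\log m)=\sqrt m\,u_m$, so $\sum_m|\phi(\log m)|^2/m=\norm u^2$.
\item Differentiating under the integral and applying Cauchy--Schwarz against $\ip{\mathcal H_{\mu_1}u}{u}$ gives $\sup_{v\ge t}|\phi'(v)|<\infty$ for $t>0$, with this supremum tending to $0$ as $t\to\infty$.
\item These two facts, together with $\norm{\mathcal H_{\mu_t}}\le(1+t/2)\mu(\R)$ near $t=0$, give $\phi\in L^2(0,\infty)$.
\item The translation identity $q_{t+\log n}(s)=q_{\log n}(s)+n^{-(s-1/2)}q_t(s)$ then yields $\phi(t+\log n)=\sqrt n\,\bigl(u_n+(\mathcal H_{\mu_t}u)_n\bigr)$. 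This is precisely what converts the scalar relations into control of the full vector.
\end{itemize}
Consequently, for $t\ge1$,
\[
\norm{u}^2\le\norm{u+\mathcal H_{\mu_t}u}^2=\sum_{n\ge1}\frac{|\phi(t+\log n)|^2}{n}\lesssim\int_t^\infty|\phi(v)|^2\,dv+\sup_{v\ge t}|\phi'(v)|^2\longrightarrow0,
\]
where the first inequality uses positivity of $\mathcal H_{\mu_t}$. This gives $u=0$ directly, and incidentally the uniform bound you wanted. The case analysis on $c$ and the pairing with $b^{(K)}$ then become unnecessary.
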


Several consequences are obtained from the same realization. In particular,
\[
 H_\mu\in\TC\iff\int_{1/2}^{\infty}\zeta(2s)\,d\mu(s)<\infty,
\]
and if $(\alpha(n))_{n\geq 1}\in\ell^2$, then $H_\mu$ agrees with the maximal operator defined by
the matrix $(\alpha(mn))_{m,n\ge1}$.

The paper is organized as follows. Section~\ref{sec:forms} recalls the required facts about
positive closed forms, proves Theorem~\ref{thm:support}, and identifies the maximal
Dirichlet-series realization. Section~\ref{sec:operators} studies the associated self-adjoint operator,
its operator core and bounded approximation, as well as trace class and the maximal
matrix realization.

\section{Closable Helson forms and their closure}\label{sec:forms}

\subsection{Preliminaries}
All inner products are linear in the first variable. Unless another space is
indicated, $\norm{\cdot}$ denotes the norm in $\ell^2=\ell^2(\N)$, and $(e_n)_{n\ge1}$
is its standard basis.

We begin with some standard facts about positive forms; see
\cite[Chapter~10]{S12} for a general treatment and
\cite[Section~2]{BS22} for their use in the setting of Hankel
moment forms.

Let $\mathcal H$ be a complex Hilbert space. A densely defined positive hermitian
form $q$ has a dense domain $\Dom(q)\subset\mathcal H$ and satisfies
\[
 q[u,v]=\overline{q[v,u]},\qquad q[u,u]\ge0.
\]
It is closed if $\Dom(q)$ is complete for the form norm
\begin{equation}\label{eq:form-norm}
 \norm u_q=\bigl(\norm u_{\mathcal H}^2+q[u,u]\bigr)^{1/2}.
\end{equation}
A positive form is closable if it has a closed extension. Equivalently, for every sequence $(u_j)_{j\ge1}\subset\Dom(q)$ satisfying
\begin{equation}\label{eq:closable-criterion}
 u_j\longrightarrow0 \quad\text{in }\mathcal H\quad(j\to\infty),
 \qquad
 q[u_j-u_k,u_j-u_k]\longrightarrow0\quad(j,k\to\infty),
\end{equation}
we have $q[u_j,u_j]\to0$ as $j\to\infty$.
In this case the smallest closed extension is denoted by $\overline q$.

We shall use the representation theorem in the following form. If $q$ is a densely
defined positive closed form, there is a unique positive self-adjoint operator $H$
such that
\begin{equation*}
 \Dom(q)=\Dom(H^{1/2}),\qquad q[u,v]=\ip{H^{1/2}u}{H^{1/2}v}.
\end{equation*}
and,
\begin{equation}\label{eq:representation-action}
 q[u,v]=\ip{Hu}{v},\qquad u\in\Dom(H),\quad v\in\Dom(q).
\end{equation}
Thus $u\in\Dom(H)$ exactly when $u\in\Dom(q)$ and the functional
$v\mapsto q[u,v]$ is bounded in the norm of $\mathcal H$.

A subspace of $\Dom(q)$ is a form core if it is dense for the norm
\eqref{eq:form-norm}. If $A$ is a closed operator, a subspace of $\Dom(A)$ is an
operator core if it is dense for the graph norm $(\norm u^2+\norm{Au}^2)^{1/2}$.
These two notions will be used separately below.

The form \eqref{eq:initial-form} is generated by the operator $N_0$. The following elementary observation will be used repeatedly; its proof is straightforward and is omitted.

\begin{lemma}\label{lem:form-operator}
Let $T:\mathcal H\supset\Dom(T)\to\mathcal K$ be densely defined and set
\[
 q_T[u,v]=\ip{Tu}{Tv}_{\mathcal K}.
\]
Then $q_T$ is closable if and only if $T$ is closable. If this holds, then
\[
 \Dom(\overline{q_T})=\Dom(\overline T),\qquad
 \overline{q_T}[u,v]=\ip{\overline T u}{\overline T v}_{\mathcal K}.
\]
\end{lemma}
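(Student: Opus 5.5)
The plan is to reduce both assertions to the standard relationship between a closable operator and the positive operator $\overline T^{*}\overline T$. Throughout, I assume $T$ is densely defined, so $\Dom(T)$ is dense in $\mathcal H$ and $q_T$ is a densely defined positive hermitian form.

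\emph{Step 1: closable $T$ gives closable $q_T$.} Suppose $T$ is closable. For $u\in\Dom(\overline T)$, set $q[u,v]=\ip{\overline Tu}{\overline Tv}$. Its form norm is exactly the graph norm of $\overline T$, and $\Dom(\overline T)$ is complete in the graph norm, so $q$ is a closed extension of $q_T$. Hence $q_T$ is closable.

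\emph{Step 2: closable $q_T$ gives closable $T$.} Suppose $q_T$ is closable, and take $u_j\in\Dom(T)$ with $u_j\to0$ in $\mathcal H$ and $Tu_j\to y$ in $\mathcal K$.
\begin{itemize}
\item Since $(Tu_j)$ converges, $q_T[u_j-u_k,u_j-u_k]=\norm{Tu_j-Tu_k}^2\to0$.
\item The criterion \eqref{eq:closable-criterion} then gives $\norm{Tu_j}^2=q_T[u_j,u_j]\to0$.
\item Therefore $y=0$, which is the sequential criterion for closability of $T$.
\end{itemize}

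\emph{Step 3: identifying the closure.} Assume now that both are closable.
\begin{itemize}
\item The closure $\overline{q_T}$ is obtained by completing $\Dom(T)$ in the form norm $\norm u^2+\norm{Tu}^2$ inside $\mathcal H$. This norm is exactly the graph norm of $T$.
\item Form-norm Cauchy sequences in $\Dom(T)$ are therefore precisely graph-norm Cauchy sequences. Their limits $u$ are precisely the elements of $\Dom(\overline T)$, and along such a sequence $Tu_j\to\overline Tu$.
\item Hence $\Dom(\overline{q_T})=\Dom(\overline T)$.
\item By continuity of the inner product in $\mathcal K$, $\overline{q_T}[u,v]=\lim\ip{Tu_j}{Tv_j}=\ip{\overline Tu}{\overline Tv}$.
\item Equivalently, one can say that the closed form from Step 1 extends $q_T$ and has $\Dom(T)$ as a form core, so it coincides with the smallest closed extension $\overline{q_T}$.
\end{itemize}

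There is no real obstacle here. The only point requiring care is the identification in Step 3: the abstract closure of a form must be realized inside $\mathcal H$ and not merely in an abstract completion. This is guaranteed because closability means exactly that this completion embeds injectively into $\mathcal H$.
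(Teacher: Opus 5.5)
Your proof is correct and complete; the paper omits the argument as straightforward, and your central observation, that the form norm of $q_T$ coincides with the graph norm of $T$, is exactly the standard reasoning it leaves to the reader. The opening reference to $\overline T^{*}\overline T$ plays no role in your argument and can be dropped.
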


By Lemma~\ref{lem:form-operator}, closability of $h_0$ is equivalent to
closability of $N_0$, and hence to density of $\Dom(N_0^*)$ in $L^2(\mu)$.
The following lemma gives the domain and action of $N_0^*$.

\begin{lemma}\label{lem:adjoint}
The adjoint $N_0^*:L^2(\mu)\to\ell^2$ has domain
\begin{equation*}
 \Dom(N_0^*)=\left\{f\in L^2(\mu):
 \left(\int_{\R}f(s)n^{-s}\,d\mu(s)\right)_{n\ge1}\in\ell^2\right\},
\end{equation*}
and
\begin{equation}\label{eq:adjoint-domain}
 (N_0^*f)_n=\int_{\R}f(s)n^{-s}\,d\mu(s).
\end{equation}
\end{lemma}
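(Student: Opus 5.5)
The plan is to compute the adjoint straight from its definition. Recall that $f\in\Dom(N_0^*)$ exactly when there is $g\in\ell^2$ with $\ip{N_0a}{f}_{L^2(\mu)}=\ip{a}{g}_{\ell^2}$ for all $a\in c_{00}$, and then $N_0^*f=g$. Test this on the basis vectors, with $a=e_n$. Since $N_0e_n=n^{-s}$, we get
\[
 \ip{N_0e_n}{f}_{L^2(\mu)}=\int_{\R}n^{-s}\,\overline{f(s)}\,d\mu(s).
\]
This integral converges absolutely by Cauchy--Schwarz, because
\[
 \int_{\R} n^{-2s}\,d\mu(s)=\alpha(n^2)<\infty
\]
by \eqref{eq:moments}. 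So $\overline{g_n}$ must equal this integral. Equivalently, $g_n=\int_{\R}f(s)n^{-s}\,d\mu(s)$, using that $n^{-s}$ is real. This identifies the action \eqref{eq:adjoint-domain}, and since $g\in\ell^2$ is required, it also gives the inclusion of $\Dom(N_0^*)$ in the stated set.

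For the reverse inclusion, take $f\in L^2(\mu)$ and suppose the sequence
\[
 g_n=\int_{\R}f(s)n^{-s}\,d\mu(s)
\]
lies in $\ell^2$. For $a\in c_{00}$ the sum $N_0a=\sum_n a_n n^{-s}$ is finite. Linearity of the integral over this finite sum gives $\ip{N_0a}{f}=\sum_n a_n\overline{g_n}=\ip{a}{g}$. The functional $a\mapsto\ip{N_0a}{f}$ is therefore bounded on the dense subspace $c_{00}$, so $f\in\Dom(N_0^*)$ with $N_0^*f=g$.

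There is no real obstacle here. The only points needing care are that each $n^{-s}$ lies in $L^2(\mu)$, which the moment hypothesis guarantees, and that the sesquilinear convention is linear in the first variable, which produces the conjugation on $f$.
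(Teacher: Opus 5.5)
Your proposal is correct and follows essentially the same route as the paper: you use the moment hypothesis $\alpha(n^2)<\infty$ to make each integral absolutely convergent, compute $\ip{N_0a}{f}_{L^2(\mu)}=\sum_n a_n\overline{g_n}$ on $c_{00}$, and observe that this functional is represented by an element of $\ell^2$ exactly when $(g_n)\in\ell^2$. Splitting the argument into two inclusions, with $a=e_n$ in the forward direction, is only a more explicit write-up of the paper's one-line conclusion.
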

\begin{proof}
For fixed $n$,
\[
 \int_{\R}|f(s)|n^{-s}\,d\mu(s)\le\norm f_{L^2(\mu)}\,\alpha(n^2)^{1/2},
\]
so the scalar integrals in \eqref{eq:adjoint-domain} are absolutely convergent.
For $a\in c_{00}$,
\[
 \ip{N_0a}{f}_{L^2(\mu)}=\sum_n a_n\overline{\int_{\R}f(s)n^{-s}\,d\mu(s)}.
\]
This functional is bounded in $a\in c_{00}$ precisely when the sequence in
\eqref{eq:adjoint-domain} belongs to $\ell^2$, proving the assertions.
\end{proof}

\subsection{Closability and the support of the measure}\label{subsec:support}
The next lemma follows from a result on closable Hankel moment forms.
\begin{lemma}[{\cite[Theorem~1.1]{Y19}}]\label{lem:hankel-support}
Let $M$ be a positive Borel measure on $\R$ with finite moments of all
orders, and put
\[
 q_j=\int_{\R}x^j\,dM(x),\qquad j\ge0.
\]
Consider the operator
\[
 A_0:\ell^2(\N_0)\supset c_{00}(\N_0)\longrightarrow L^2(M),
 \qquad (A_0u)(x)=\sum_{k\ge0}u_kx^k.
\]
If $A_0$ is closable and
\[
 \sum_{k\ge1}q_{2k}^{-1/(2k)}=\infty,
\]
then
\[
 M(\R\setminus(-1,1))=0.
\]
\end{lemma}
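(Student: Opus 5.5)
The plan is to show that every $f\in\Dom(A_0^*)$ vanishes $M$-almost everywhere outside $(-1,1)$. Closability of $A_0$ means $\Dom(A_0^*)$ is dense in $L^2(M)$. So if $M(\R\setminus(-1,1))>0$, the indicator of that set would be orthogonal to a dense subspace, which is impossible. Exactly as in Lemma~\ref{lem:adjoint}, $f\in\Dom(A_0^*)$ means that the sequence
\[
 g_k=\int_{\R}x^kf(x)\,dM(x),\qquad k\ge0,
\]
lies in $\ell^2(\N_0)$; in particular $\sup_k|g_k|<\infty$. Moreover, by Cauchy--Schwarz, $|g_k|\le\norm f_{L^2(M)}\,q_{2k}^{1/2}$.

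The first step is a quasi-analyticity argument. Put
\[
 \Phi(t)=\int_{\R}e^{itx}f(x)\,dM(x),\qquad \Psi(t)=\sum_{k\ge0}\frac{g_k(it)^k}{k!}.
\]
Since $M$ is finite, $f\in L^1(M)$, so $\Phi$ is bounded on $\R$. Differentiating under the integral gives $|\Phi^{(k)}(t)|\le\norm f\,q_{2k}^{1/2}=:\norm f\,M_k$ for all $t$, with $\Phi^{(k)}(0)=i^kg_k$. The function $\Psi$ is entire of exponential type at most $1$ and has the same derivatives at $0$. Its derivatives on any bounded interval are bounded by a constant times $e^{|t|}$, and this lies in the Denjoy--Carleman class of $(M_k)$ after enlarging the sequence if necessary. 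For instance, one can work on a bounded interval with the class generated by $\max(M_k,1)$, which is still log-convex by Cauchy--Schwarz for moments. The Carleman hypothesis says precisely that $\sum_k M_k^{-1/k}=\sum_k q_{2k}^{-1/(2k)}=\infty$. The Denjoy--Carleman theorem then forces $\Phi=\Psi$ on every bounded interval, hence on $\R$. The main obstacle is making this step airtight: one must choose a single quasi-analytic class containing both $\Phi$ and $\Psi$ on $[-T,T]$ and check that it remains quasi-analytic. I would check carefully that the trivial bounds on $\Psi^{(k)}$ are dominated by $C_T R^k M_k$ for some $R$.

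Once $\Phi=\Psi$, the function $\Phi$ is the restriction to $\R$ of an entire function of exponential type $\le1$, and it is bounded on $\R$. By the Paley--Wiener--Schwartz theorem, its distributional Fourier transform, which is the complex measure $f\,dM$ up to normalization, is supported in $[-1,1]$. Hence $f=0$ $M$-a.e. on $\R\setminus[-1,1]$.

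It remains to remove the endpoints $\pm1$. With $f\,dM$ now supported in $[-1,1]$, dominated convergence shows that the contribution of $(-1,1)$ to $g_k$ tends to $0$. Writing $c_\pm=f(\pm1)M(\{\pm1\})$, we get $g_{2k}\to c_++c_-$ and $g_{2k+1}\to c_+-c_-$. Because $g\in\ell^2$, both limits vanish, so $c_\pm=0$. Thus every $f\in\Dom(A_0^*)$ vanishes $M$-a.e. on $\R\setminus(-1,1)$, and density of $\Dom(A_0^*)$ gives $M(\R\setminus(-1,1))=0$.
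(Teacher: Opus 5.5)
Your proof is correct. The paper does not prove this lemma; it quotes it from Yafaev's corrected closability theorem \cite[Theorem~1.1]{Y19}. Your argument is therefore a self-contained substitute rather than a variant of anything in the text.

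Its structure matches the paper's proof of (\ref{item:support-closable})$\Rightarrow$(\ref{item:support-measure}) in Theorem~\ref{thm:support}. You test against $f\in\Dom(A_0^*)$ and show by a uniqueness/continuation argument that the complex measure $f\,dM$ vanishes off the target set. You then remove the boundary atoms because $(g_k)\in\ell^2$ forces $g_k\to0$; this is the counterpart of the paper's $e^{t/2}F(t)\to0$ at $s=1/2$. Finally you conclude by density of $\Dom(A_0^*)$.

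The difference is in the middle step. In Theorem~\ref{thm:support} the measure is already known to lie on $(0,\infty)$. So the samples $F(\log n)$ come from a genuinely convergent Laplace transform, and the Cauchy-transform continuation needs no quasi-analyticity. When the support of $M$ may be unbounded on both sides, the $g_k$ are a priori only asymptotic Taylor coefficients of $\int f(x)(1-zx)^{-1}\,dM(x)$ at $z=0$. Your identification $\Phi=\Psi$ via Denjoy--Carleman is exactly what bridges this gap, and it is where the Carleman hypothesis is used. This fits Example~\ref{ex:counter}: pushed forward by $s\mapsto2^{-s}$ as in Lemma~\ref{lem:dyadic}, it gives a closable Hankel example with all its mass outside $(-1,1)$.

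The obstacle you flag is harmless. On $[-T,T]$,
\[
|(\Phi-\Psi)^{(k)}|\le\bigl(\norm f_{L^2(M)}+e^T\sup_j|g_j|\bigr)\max(q_{2k}^{1/2},1).
\]
Moreover $\sum_k\min(q_{2k}^{-1/(2k)},1)=\infty$: either infinitely many terms equal $1$, or the terms eventually coincide with the Carleman terms.

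The one step worth writing out is the entry into Paley--Wiener--Schwartz. From exponential type at most $1$ and $|\Psi|\le\norm f_{L^1(M)}$ on $\R$, the Phragm\'en--Lindel\"of principle gives $|\Psi(x+iy)|\le\norm f_{L^1(M)}e^{|y|}$, which is the bound that theorem requires.
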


\begin{lemma}\label{lem:dyadic}
Suppose $h_0$ is closable and, for some integer $b\ge2$,
\begin{equation}\label{eq:carleman-b}
 \sum_{k\ge1}\alpha(b^{2k})^{-1/(2k)}=\infty.
\end{equation}
Then
\[
 \mu((-\infty,0])=0.
\]
\end{lemma}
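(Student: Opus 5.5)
We restrict $N_0$ to sequences supported on the powers $\{b^k:k\ge0\}$ and transport the problem to a Hankel moment setting, so that Lemma~\ref{lem:hankel-support} applies. For $u\in c_{00}(\N_0)$ put $a=\sum_k u_k e_{b^k}\in c_{00}$. Then
\[
 (N_0a)(s)=\sum_{k\ge0}u_k b^{-ks}=\sum_{k\ge0}u_k x^k,\qquad x=b^{-s}.
\]
Let $\phi(s)=b^{-s}$, which is a homeomorphism of $\R$ onto $(0,\infty)$, and let $M=\phi_*\mu$ be the image measure. Then $L^2(\mu)\ni g\circ\phi\mapsto g\in L^2(M)$ is unitary, and under this identification the restriction of $N_0$ becomes the operator $A_0$ of Lemma~\ref{lem:hankel-support}. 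The moments of $M$ are
\[
 q_j=\int x^j\,dM=\int b^{-js}\,d\mu(s)=\alpha(b^j),
\]
which are finite by \eqref{eq:moments}. In particular $q_{2k}=\alpha(b^{2k})$, so \eqref{eq:carleman-b} is exactly the Carleman-type hypothesis of Lemma~\ref{lem:hankel-support}.

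It remains to show that $A_0$ is closable. By Lemma~\ref{lem:form-operator}, closability of $h_0$ implies closability of $N_0$. The map $u\mapsto a$ is an isometry of $\ell^2(\N_0)$ onto the closed subspace spanned by $\{e_{b^k}\}$, and it carries $c_{00}(\N_0)$ into $c_{00}$. A restriction of a closable operator to a subspace of its domain is closable, so $N_0$ restricted to this image of $c_{00}(\N_0)$ is closable. Conjugating by the isometry and the unitary map shows that $A_0$ is closable. Alternatively one can use the sequential criterion: if $u_j\to0$ and $A_0u_j\to g$, then the corresponding $a_j\to0$ and $N_0a_j\to g\circ\phi$, so $g=0$.

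Lemma~\ref{lem:hankel-support} now gives $M(\R\setminus(-1,1))=0$. Since $M$ lives on $(0,\infty)$, this says $M([1,\infty))=0$. Finally, $b^{-s}\ge1$ exactly when $s\le0$, so
\[
 \mu((-\infty,0])=M([1,\infty))=0.
\]

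None of these steps is hard; the only care needed is in the closability transfer, namely that restriction preserves closability and that the isometric change of coordinates is compatible with domains. Beyond that, one only needs that the pushforward identifies the moments exactly as $q_j=\alpha(b^j)$.
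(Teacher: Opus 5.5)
Your proposal is correct and matches the paper's proof essentially step for step. The paper also pushes $\mu$ forward under $s\mapsto b^{-s}$ and identifies the moments as $q_k=\alpha(b^k)$. It transfers closability through the intertwining relation $UA_0=N_0J$ with $Je_k=e_{b^k}$, using the same sequential argument you give. It then applies Lemma~\ref{lem:hankel-support} to obtain $M([1,\infty))=0$, which is equivalent to $\mu((-\infty,0])=0$.
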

\begin{proof}
Let $M$ be the positive Borel measure on $\R$ defined by
\[
 M(E)=\mu\bigl(\{s\in\R:b^{-s}\in E\}\bigr)
\]
for every Borel set $E\subset\R$. Its moments are
\[
 q_k=\int_{\R}x^k\,dM(x)
 =\int_{\R}b^{-ks}\,d\mu(s)
 =\alpha(b^k),\qquad k\ge0.
\]
The map
\[
 U:L^2(M)\to L^2(\mu),\qquad Uf(s)=f(b^{-s}),
\]
is unitary. Let
\[
 J:\ell^2(\{0,1,\ldots\})\to\ell^2(\N),\qquad Je_k=e_{b^k}.
\]
Then $UA_0=N_0J$ on finitely supported sequences. If $u_j\to0$ and $A_0u_j\to v$,
closability of $N_0$ gives $Uv=0$, so $A_0$ is closable.
By Lemma~\ref{lem:hankel-support}, $M([1,\infty))=0$, which is equivalent to the asserted
conclusion.
\end{proof}

To prove the stronger conclusion
$\mu((-\infty,1/2])=0$, we use the following estimate relating the values of a
Laplace transform at $\log n$, $n\ge1$, to its weighted $L^2$-norm.

\begin{lemma}\label{lem:sampling}
Let $\mu$ be a finite positive measure concentrated on $(0,\infty)$.
For $f\in L^2(\mu)$, define
\[
 F(t)=\int_0^\infty f(s)e^{-st}\,d\mu(s),\qquad t\ge0.
\]
Then
\begin{equation}\label{eq:sampling}
 \norm f_{L^2(\mu)}^2+\sum_{n\ge1}|F(\log n)|^2
 \asymp_\mu
 \norm f_{L^2(\mu)}^2+\int_0^\infty e^t|F(t)|^2\,dt.
\end{equation}
Both quantities are allowed to be infinite.
\end{lemma}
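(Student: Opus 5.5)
The plan is to compare each sample $F(\log n)$ with the values of $F$ on the interval $I_n=[\log n,\log(n+1)]$, and to exploit the identity
\[
 \int_{I_n}e^t\,dt=(n+1)-n=1 .
\]
Since the intervals $I_n$, $n\ge1$, tile $[0,\infty)$, this identity gives the exact rewriting
\[
 \sum_{n\ge1}|F(\log n)|^2=\sum_{n\ge1}\int_{I_n}e^t|F(\log n)|^2\,dt
 =\int_0^\infty e^t|\Phi(t)|^2\,dt,
\]
where $\Phi(t)=F(\log n)$ for $t\in I_n$ is a step function. By Minkowski's inequality in $L^2(e^t\,dt)$, both directions of \eqref{eq:sampling} then follow from one error bound:
\[
 \int_0^\infty e^t|\Phi(t)-F(t)|^2\,dt\le C_\mu\norm f_{L^2(\mu)}^2 .
\]
Here $C_\mu$ depends only on $\mu$. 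This bound also justifies the remark that both sides may be infinite: either both are finite or both are infinite, and the comparison holds with constants depending only on $\mu$.

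To prove the error bound, write, for $t\in I_n$,
\[
 \Phi(t)-F(t)=\int_0^\infty f(s)\bigl(n^{-s}-e^{-st}\bigr)\,d\mu(s).
\]
Cauchy--Schwarz in $L^2(\mu)$ then gives
\[
 |\Phi(t)-F(t)|^2\le\norm f_{L^2(\mu)}^2\int_0^\infty|n^{-s}-e^{-st}|^2\,d\mu(s).
\]
Next estimate the kernel, with $s>0$ and $t\ge\log n$, by the mean value theorem:
\[
 |n^{-s}-e^{-st}|\le s\,(t-\log n)\,n^{-s}\le \frac{s\,n^{-s}}{n},
\]
since $t-\log n\le\log(1+1/n)\le 1/n$. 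For $n=1$ I will instead use the crude bound $|1-e^{-st}|\le1$. Multiplying by $e^t\le n+1\le 2n$ and integrating over $I_n$, whose length is at most $1/n$, the error integral is bounded by $\norm f^2$ times
\[
 \mu(\R)\log 2\,e^{\log 2}+\int_0^\infty s^2\sum_{n\ge2}\frac{2\,n^{-2s}}{n^{2}}\,d\mu(s).
\]
The first term is the crude $n=1$ contribution.

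The only point needing care, and the one I expect to be the main obstacle, is that $s$ ranges over the unbounded set $(0,\infty)$. A naive derivative bound for $F'$ would produce an unbounded factor $s$ or $s^2$. Here it is absorbed by the decay in $n$: for $s>0$ we have $\sum_{n\ge2}n^{-2-2s}\le C\,4^{-s}$, and $\sup_{s>0}s^2 4^{-s}<\infty$. Hence the integrand above is bounded uniformly in $s$, and finiteness of $\mu$ gives the constant $C_\mu$. It remains to check that $\mu$ being concentrated on $(0,\infty)$ is exactly what makes these kernel estimates valid, which it is, since only $s>0$ was used. Combining the error bound with the triangle inequality in both directions, and adding $\norm f_{L^2(\mu)}^2$ to each side, yields \eqref{eq:sampling}.
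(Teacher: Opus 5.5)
Your proof is correct, and it reaches the lemma by a different mechanism from the paper's. The paper's argument proceeds in three steps:
\begin{enumerate}
\item It first establishes regularity of $F$: continuity at $0$, differentiation under the integral sign, and absolute continuity on bounded intervals.
\item It proves the total-variation bound $\int_0^\infty|F'(t)|\,dt\le\norm f_{L^1(\mu)}$ using Tonelli and $\int_0^\infty se^{-st}\,dt=1$.
\item It applies first-order Euler summation to $\varphi(x)=|F(\log x)|^2$, obtaining the additive estimate $\bigl|\sum_{n\le K}|F(\log n)|^2-\int_0^{\log(K+1)}e^t|F(t)|^2\,dt\bigr|\le2\mu(\R)\norm f_{L^2(\mu)}^2$ uniformly in $K$, and finishes by monotone convergence.
\end{enumerate}

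You instead use the exact identity $\int_{I_n}e^t\,dt=1$ to rewrite the sum as the weighted $L^2$ norm of a step function. You then bound $\norm{e^{t/2}(\Phi-F)}_{L^2(0,\infty)}$ by Cauchy--Schwarz in $L^2(\mu)$ and a pointwise mean-value estimate on the kernel. The unbounded factor $s$ is absorbed through the decay $\sum_{n\ge2}n^{-2-2s}\le4^{-s}\sum_{n\ge2}n^{-2}$, and $I_1$ is treated separately by the crude bound. Minkowski then finishes the argument, including the case of infinite quantities.

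Each route has its advantages.
\begin{itemize}
\item In the paper, the factor $s$ is removed by integrating in $t$. No exceptional interval is needed, and the comparison holds directly for the squared quantities and for every partial sum. There is also no loss from $(x+y)^2\le2x^2+2y^2$.
\item Your route avoids Euler summation and the regularity analysis of $F$ altogether. It also gives a somewhat more structural statement: $F$ lies within $O_\mu(\norm f_{L^2(\mu)})$ of the sampled step function in $L^2(e^t\,dt)$, which is all the lemma requires.
\end{itemize}
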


\begin{proof}
Since $\mu$ is finite and $f\in L^2(\mu)$, the Cauchy--Schwarz
inequality gives
\[
 |F(t)|\le \norm{f}_{L^1(\mu)}
 \le \mu(\R)^{1/2}\norm{f}_{L^2(\mu)},\qquad t\ge0.
\]
The dominated convergence theorem shows that $F$ is continuous
on $[0,\infty)$. For $\varepsilon>0$, the estimate
\[
 s|f(s)|e^{-st}\le \frac{|f(s)|}{e\varepsilon},
 \qquad s>0,\quad t\ge\varepsilon,
\]
justifies differentiation under the integral sign and gives
\[
 F'(t)=-\int_0^\infty sf(s)e^{-st}\,d\mu(s),
 \qquad t>0.
\]
By the triangle inequality and Tonelli's theorem,
\[
 \begin{aligned}
 \int_0^\infty |F'(t)|\,dt
 &\le \int_0^\infty\int_0^\infty
       s|f(s)|e^{-st}\,d\mu(s)\,dt\\
 &=\int_0^\infty |f(s)|
       \left(\int_0^\infty se^{-st}\,dt\right)d\mu(s)
 =\norm{f}_{L^1(\mu)}.
 \end{aligned}
\]
For $0<\varepsilon<t$, the fundamental theorem of calculus gives
$F(t)-F(\varepsilon)=\int_\varepsilon^t F'(u)\,du$.
Since $F$ is continuous at zero and $F'\in L^1(0,\infty)$,
letting $\varepsilon\downarrow0$ yields
\[
 F(t)-F(0)=\int_0^t F'(u)\,du.
\]
Hence $F$ is absolutely continuous on every bounded interval
in $[0,\infty)$.

Put $\varphi(x)=|F(\log x)|^2$, $x\ge1$. This function is locally absolutely
continuous, and
\[
 \varphi'(x)=\frac2x\operatorname{Re}
 \bigl(F'(\log x)\overline{F(\log x)}\bigr)
\]
for almost every $x>1$. Since $|F(t)|\le\norm f_{L^1(\mu)}$,
\[
 \begin{aligned}
 \int_1^\infty|\varphi'(x)|\,dx
 &\le2\int_0^\infty|F(t)|\,|F'(t)|\,dt\\
 &\le2\norm f_{L^1(\mu)}^2
 \le2\mu(\R)\norm f_{L^2(\mu)}^2.
 \end{aligned}
\]
For each integer $K\ge1$, the first-order Euler summation formula gives
\[
 \sum_{n=1}^K\varphi(n)-\int_1^{K+1}\varphi(x)\,dx
 =-\int_1^{K+1}(1-\{x\})\varphi'(x)\,dx,
\]
where $\{x\}$ denotes the fractional part of $x$,
see \cite[Theorem~7.13, pp.~149--150]{A74}. Consequently, after the
substitution $x=e^t$,
\begin{equation}\label{eq:euler-estimate}
 \left|\sum_{n=1}^K|F(\log n)|^2
 -\int_0^{\log(K+1)}e^t|F(t)|^2\,dt\right|
 \le2\mu(\R)\norm f_{L^2(\mu)}^2.
\end{equation}
The asserted comparison follows from \eqref{eq:euler-estimate} by the monotone
convergence theorem.
\end{proof}

\begin{proof}[Proof of Theorem~\ref{thm:support}]
 (\ref{item:support-closable})$\Rightarrow$(\ref{item:support-measure}):   Assume first that $h_0$ is closable and that \eqref{eq:carleman} holds. By
Lemma~\ref{lem:dyadic}, with $b=2$, the measure $\mu$ is concentrated on
$(0,\infty)$. Let $f\in\Dom(N_0^*)$, and let $F$ be as in
Lemma~\ref{lem:sampling}. Then \eqref{eq:adjoint-domain} and \eqref{eq:sampling}  give
\[
 (F(\log n))_{n\ge1}\in\ell^2, \qquad e^{\frac t 2} F(t) \in L^2(0,\infty). 
\]
Consequently
\[
 G(z)=\int_0^\infty e^{-zt}F(t)\,dt
\]
is holomorphic in
\[
 \Omega=\{z\in\C:\operatorname{Re}z>-1/2\}.
\]
Let $\eta$ be the finite complex measure defined by $d\eta=f\,d\mu$.
For $\operatorname{Re}z>0$, Tonelli's theorem gives
\[
 \begin{aligned}
 \int_0^\infty\int_0^\infty
 |f(s)|e^{-(\operatorname{Re}z+s)t}\,d\mu(s)\,dt
 &=\int_0^\infty\frac{|f(s)|}{\operatorname{Re}z+s}\,d\mu(s)\\
 &\le\frac{\norm f_{L^1(\mu)}}{\operatorname{Re}z}<\infty.
 \end{aligned}
\]
Fubini's theorem therefore yields
\begin{equation*}
 \begin{aligned}
 G(z)
 &=\int_0^\infty e^{-zt}
 \left(\int_0^\infty f(s)e^{-st}\,d\mu(s)\right)dt\\
 &=\int_0^\infty f(s)
 \left(\int_0^\infty e^{-(z+s)t}\,dt\right)d\mu(s)\\
 &=\int_0^\infty\frac{f(s)}{z+s}\,d\mu(s)
 =\int_0^\infty\frac{d\eta(s)}{z+s}.
 \end{aligned}
\end{equation*}
Let $\sigma$ be the image of $\eta$ under the map $s\mapsto-s$, its Cauchy transform
\[
 C_\sigma(z)=\int_{-\infty}^0\frac{d\sigma(y)}{z-y}
\]
is holomorphic in $\C\setminus(-\infty,0]$. Since $C_\sigma(z)=G(z)$ for
$\operatorname{Re}z>0$, the identity theorem gives
\[
 C_\sigma(z)=G(z),\qquad z\in\Omega\setminus(-1/2,0].
\]
As $G$ is holomorphic in $\Omega$, the Cauchy-transform result
recalled in \cite[Section~1, p.~21]{L20}, applied locally, yields
\[
 \sigma|_{(-1/2,0]}=0.
\]
Consequently, $\eta|_{[0,1/2)}=0$, and hence $f=0$ $\mu$-almost everywhere
on $[0,1/2)$. Then we have
\[
 e^{t/2}F(t)=\int_{[1/2,\infty)}e^{-t(s-1/2)}f(s)\,d\mu(s).
\]
Since $f\in L^1(\mu)$ and $0\le e^{-t(s-1/2)}\le1$ for $s\ge1/2$,
the dominated convergence theorem gives
\[
 \lim_{t\to\infty}e^{t/2}F(t)=\int_{\{1/2\}}f(s)\,d\mu(s).
\]
The function $e^{t/2}F(t)$ belongs to $L^2(0,\infty)$, so its limit at
infinity must be zero. Hence
\[
 \int_{\{1/2\}}f(s)\,d\mu(s)=0.
\]
Combining this with the vanishing of $f$ on $[0,1/2)$, we obtain
\[
 \int_{[0,1/2]}f(s)\,d\mu(s)=0,\qquad f\in\Dom(N_0^*).
\]
Thus $\mathbf 1_{[0,1/2]}$ is orthogonal to the dense subspace $\Dom(N_0^*)$
of $L^2(\mu)$, and consequently $\mu([0,1/2])=0$.
Together with Lemma~\ref{lem:dyadic}, this yields $\mu((-\infty,1/2])=0$,
proving (\ref{item:support-closable})$\Rightarrow$(\ref{item:support-measure}).

(\ref{item:support-decay}) $\Leftrightarrow$ (\ref{item:support-measure}) is obvious.

(\ref{item:support-decay}), (\ref{item:support-measure}) $\Rightarrow$ (\ref{item:support-closable}):   Assume now that $\mu$ is concentrated on $(1/2,\infty)$. Let $a_j\in c_{00}$
satisfy $a_j\to0$ in $\ell^2$ and $N_0a_j\to f$ in $L^2(\mu)$.
By \eqref{eq:evaluation},
\[
 |(N_0a_j)(s)|\le\zeta(2s)^{1/2}\|a_j\|_{\ell^2}\longrightarrow0,
 \qquad s>1/2.
\]
The convergence in $L^2(\mu)$ also yields a subsequence converging to $f$
$\mu$-almost everywhere. Since $\mu((-\infty,1/2])=0$, the two limits imply
that $f=0$ $\mu$-almost everywhere. Thus $N_0$ is closable, and
Lemma~\ref{lem:form-operator} gives the closability of $h_0$. Finally, the implication (\ref{item:support-decay}) $\Rightarrow$ \eqref{eq:carleman} is  immediate.

\end{proof}

\begin{remark}\label{rem:bases}
Theorem~\ref{thm:support} remains valid if \eqref{eq:carleman} is replaced by
\eqref{eq:carleman-b} for any fixed integer $b\ge2$. More generally, the
condition with base $q$ implies the condition with base $p$ whenever
$p,q$ are integers with $2\le p<q$. Thus $b=2$ gives the weakest condition
in this family.

The case $\mu=0$ is immediate. Suppose that $\mu\ne0$, and put
\[
 A(t)=\int_{\R}e^{-ts}\,d\mu(s),\qquad t\ge0.
\]
For each $t\ge0$, choose an integer $N\ge1$ such that $\log N\ge t$.
Since $e^{-ts}\le N^{-s}$ for $s\le0$ and $e^{-ts}\le1$ for $s>0$,
\[
 A(t)\le\int_{(-\infty,0]}N^{-s}\,d\mu(s)+\mu((0,\infty))
 \le\alpha(N)+\mu(\R)<\infty.
\]
For $t_0,t_1\ge0$ and $0<\theta<1$, H\"older's inequality gives
\[
 \begin{aligned}
 A((1-\theta)t_0+\theta t_1)
 &=\int_{\R}(e^{-t_0s})^{1-\theta}(e^{-t_1s})^\theta\,d\mu(s)\\
 &\le A(t_0)^{1-\theta}A(t_1)^\theta.
 \end{aligned}
\]
Hence $A$ is logarithmically convex.

Let $p,q$ be integers with $2\le p<q$, and set $\theta=\log p/\log q$.
Since $A(0)=\mu(\R)$ and
$A(\log n)=\alpha(n)$, logarithmic convexity yields
\[
 \alpha(p^{2k})=A(2k\log p)
 \le\mu(\R)^{1-\theta}\alpha(q^{2k})^\theta.
\]
Consequently,
\[
 \alpha(p^{2k})^{-1/(2k)}
 \ge\mu(\R)^{-(1-\theta)/(2k)}
 \left(\alpha(q^{2k})^{-1/(2k)}\right)^\theta.
\]
The first factor on the right tends to $1$. Moreover, for any positive
sequence $(u_k)$ and $0<\theta<1$, divergence of $\sum_k u_k$ implies
divergence of $\sum_k u_k^\theta$: either $u_k\ge1$ infinitely often, or
$u_k^\theta\ge u_k$ eventually. The preceding inequality therefore proves
the asserted implication between the two Carleman conditions.

Finally, Lemma~\ref{lem:dyadic} applies with any base $b\ge2$, and the
remaining steps in the proof of Theorem~\ref{thm:support} are independent
of that choice.
\end{remark}

The following example shows that the Carleman condition cannot be omitted
from the implication (\ref{item:support-closable})$\Rightarrow$(\ref{item:support-measure}) in Theorem~\ref{thm:support}.

\begin{example}\label{ex:counter}
Let
\[
 \mu=\sum_{j=0}^{\infty}\frac1{(j!)^2}\delta_{-j}.
\]
This measure is finite, and
\[
 \alpha(n)=\int_{\R}n^{-s}\,d\mu(s)
 =\sum_{j=0}^{\infty}\frac{n^j}{(j!)^2}<\infty,\qquad n\ge1.
\]
The associated form $h_0$ is closable, although
\[
 \mu((-\infty,1/2])=\mu(\R)>0,\qquad
 \sqrt n\,\alpha(n)\ge\sqrt n.
\]

To prove closability, we show that $\Dom(N_0^*)$ is dense in $L^2(\mu)$.
For $k\ge0$ and $0<\varepsilon<1$, define
\[
 f_{k,\varepsilon}(-j)=
 \begin{cases}
 0,&j<k,\\[2mm]
 \displaystyle(j!)^2\frac{(-\varepsilon)^{j-k}}{(j-k)!},&j\ge k.
 \end{cases}
\]
Then
\[
 \norm{f_{k,\varepsilon}}_{L^2(\mu)}^2
 =\sum_{\ell=0}^{\infty}\varepsilon^{2\ell}
 \left(\frac{(k+\ell)!}{\ell!}\right)^2<\infty,
\]
since the factorial ratio is a polynomial of degree $k$ in $\ell$.
Moreover,
\[
 \int_{\R}f_{k,\varepsilon}(s)n^{-s}\,d\mu(s)
 =\sum_{\ell=0}^{\infty}\frac{(-\varepsilon)^\ell n^{k+\ell}}{\ell!}
 =n^ke^{-\varepsilon n}.
\]
The sequence on the right belongs to $\ell^2$, so Lemma~\ref{lem:adjoint}
gives $f_{k,\varepsilon}\in\Dom(N_0^*)$.

As $\varepsilon\downarrow0$, the dominated convergence theorem gives
\[
 \norm{f_{k,\varepsilon}-(k!)^2\mathbf1_{\{-k\}}}_{L^2(\mu)}^2
 =\sum_{\ell=1}^{\infty}\varepsilon^{2\ell}
 \left(\frac{(k+\ell)!}{\ell!}\right)^2\longrightarrow0.
\]
Thus the closure of $\Dom(N_0^*)$ contains every $\mathbf1_{\{-k\}}$.
Their linear span is dense in $L^2(\mu)$, proving that $h_0$ is closable.

\end{example}

\subsection{The maximal Dirichlet-series realization}
From now on we assume only that $\mu$ is finite and satisfies
\eqref{eq:concentration}.
The operator $N_{\max}$ is closed. Indeed, let $a_j\in\Dom(N_{\max})$
satisfy $a_j\to a$ in $\ell^2$ and $N_{\max}a_j\to f$ in $L^2(\mu)$.
By \eqref{eq:evaluation},
\[
 |(\ds a_j)(s)-(\ds a)(s)|
 \le\zeta(2s)^{1/2}\norm{a_j-a}_{\ell^2}\longrightarrow0,
 \qquad s>1/2.
\]
Passing to a subsequence, we also have $\ds a_j\to f$ $\mu$-almost everywhere.
Since $\mu$ is concentrated on $(1/2,\infty)$, the two limits give
$\ds a=f$ $\mu$-almost everywhere. Thus $a\in\Dom(N_{\max})$ and
$N_{\max}a=f$, proving closedness. Moreover, the restriction of $N_{\max}$
to $c_{00}$ is $N_0$, so $\overline {N_0}\subset N_{\max}$.

To prove $\overline{N_0}=N_{\max}$, we introduce the operator
$L:L^2(\mu)\supset\Dom(L)\to L^2(0,\infty)$ defined by
\begin{equation*}
 (Lf)(t)=e^{t/2}F(t)
 =\int_{(1/2,\infty)}e^{-t(s-1/2)}f(s)\,d\mu(s),\qquad t\ge0,
\end{equation*}
where $F$ is as in Lemma~\ref{lem:sampling}, and
\[
 \Dom(L)=\{f\in L^2(\mu):Lf\in L^2(0,\infty)\}.
\]
Lemmas~\ref{lem:adjoint} and \ref{lem:sampling} then give
\begin{equation}\label{eq:graph-equivalence}
 \begin{gathered}
 \Dom(N_0^*)=\Dom(L),
 \\
 \norm f_{L^2(\mu)}^2+\norm{N_0^*f}_{\ell^2}^2
 \asymp_\mu\norm f_{L^2(\mu)}^2+\norm{Lf}_{L^2(0,\infty)}^2.
 \end{gathered}
\end{equation}

\begin{proof}[Proof of Theorem~\ref{thm:synthesis}]
Fix $a\in \Dom(N_{\max})$ and $f\in \Dom(N_0^*)$. For each integer $N\geq1$, put
\[
q_N(s)=1-e^{-N(s-1/2)},\qquad s>1/2.
\]
Since $0\leq q_N(s)\leq1$ and
$q_N(s)\leq N(s-1/2)$, we have
\begin{align*}
&\int_{1/2}^{\infty}
q_N(s)^2\zeta(2s)\,d\mu(s)\\
&\quad\leq
\int_{1/2}^{\infty}
\left(q_N(s)^2+
\frac{q_N(s)^2}{2s-1}\right)d\mu(s)\\
&\quad\leq
\left(1+\frac N2\right)\mu(\mathbb R)<\infty.
\end{align*}
By \eqref{eq:evaluation}, the Cauchy--Schwarz inequality, and
Tonelli's theorem, for every $b\in\ell^2$,
\begin{align*}
&\sum_{n=1}^{\infty}|b_n|
\int_{1/2}^{\infty}
n^{-s}q_N(s)|f(s)|\,d\mu(s)\\
&\quad=
\int_{1/2}^{\infty}
q_N(s)|f(s)|
\sum_{n=1}^{\infty}|b_n|n^{-s}\,d\mu(s)\\
&\quad\leq
\|b\|_{\ell^2}\|f\|_{L^2(\mu)}
\left(
\int_{1/2}^{\infty}
q_N(s)^2\zeta(2s)\,d\mu(s)
\right)^{1/2}<\infty.
\end{align*}
Since $q_Nf\in L^2(\mu)$, the preceding
estimate and Lemma~\ref{lem:adjoint} imply that
$q_Nf\in\operatorname{Dom}(N_0^*)$.
The absolute convergence also permits the
interchange of the sum and the integral, giving
\begin{equation}\label{eq:cutoff-interchange}
\begin{aligned}
&\langle N_{\max}a,q_Nf\rangle_{L^2(\mu)}\\
&\quad=
\sum_{n=1}^{\infty}a_n
\int_{1/2}^{\infty}
n^{-s}q_N(s)\overline{f(s)}\,d\mu(s)\\
&\quad=
\langle a,N_0^*(q_Nf)\rangle_{\ell^2}.
\end{aligned}
\end{equation}
By \eqref{eq:graph-equivalence} and the definition of $L$,
\begin{align*}
&\|f-q_Nf\|_{L^2(\mu)}^2
+\|N_0^*f-N_0^*(q_Nf)\|_{\ell^2}^2\\
&\quad\lesssim_\mu
\int_{1/2}^{\infty}
e^{-2N(s-1/2)}|f(s)|^2\,d\mu(s)
+\int_N^\infty |(Lf)(t)|^2\,dt\\
&\quad\longrightarrow0,
\end{align*}
where the first integral tends to zero by
the dominated convergence theorem and the
second by $Lf\in L^2(0,\infty)$.
Letting $N\to\infty$ in
\eqref{eq:cutoff-interchange}, we obtain
\[
\langle N_{\max}a,f\rangle_{L^2(\mu)}
=
\langle a,N_0^*f\rangle_{\ell^2},
\]
hence $a\in\operatorname{Dom}(N_0^{**})$ and
$N_0^{**}a=N_{\max}a$. Therefore, we have $N_{\max}\subset N_0^{**}$.
Since $N_0$ is closable,
$N_0^{**}=\overline{N_0}$.
Thus $N_{\max}\subset\overline{N_0}$,
and consequently
\[
\overline{N_0}=N_{\max}.
\]
Finally, Lemma~\ref{lem:form-operator} gives
\[
\operatorname{Dom}(h)=\operatorname{Dom}(N),
\qquad
h[a,b]=\langle Na,Nb\rangle_{L^2(\mu)},
\]
where $N=\overline{N_0}=N_{\max}$
and $a,b\in\operatorname{Dom}(N)$.
\end{proof}

\section{Self-adjoint operators associated with Helson forms}\label{sec:operators}

\subsection{The self-adjoint operator associated with \texorpdfstring{$h$}{h}}
Let $H_\mu$ be the positive self-adjoint operator associated with the closed
Helson form $h$.

\begin{proposition}\label{prop:factor}
We have
\begin{equation}\label{eq:factorization}
 H_\mu=N^*N.
\end{equation}
Its domain is
\begin{equation}\label{eq:operator-domain}
 \Dom(H_\mu)=\left\{a\in\Dom(N):
 \left(\int_{1/2}^\infty(N a)(s)n^{-s}\,d\mu(s)\right)_{n\ge1}\in\ell^2\right\}.
\end{equation}
For $a\in\Dom(H_\mu)$,
\begin{equation}\label{eq:operator-action}
 (H_\mu a)_n=\int_{1/2}^\infty(\ds a)(s)n^{-s}\,d\mu(s),\qquad n\ge1.
\end{equation}
\end{proposition}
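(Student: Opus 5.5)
The factorization \eqref{eq:factorization} follows from the representation theorem together with von Neumann's theorem for closed operators. By Theorem~\ref{thm:synthesis}, $N=\overline{N_0}=N_{\max}$ is a closed, densely defined operator from $\ell^2$ to $L^2(\mu)$. The form $h$ satisfies $h[a,b]=\ip{Na}{Nb}_{L^2(\mu)}$ on $\Dom(h)=\Dom(N)$.

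Von Neumann's theorem shows that $N^*N$ is positive and self-adjoint, and that $\Dom(N^*N)$ is a core for $N$. We then check that $N^*N$ satisfies the characterization \eqref{eq:representation-action}. Take $a\in\Dom(N^*N)$ and $b\in\Dom(N)$. Then $h[a,b]=\ip{Na}{Nb}=\ip{N^*Na}{b}$, so $a\in\Dom(H_\mu)$ and $H_\mu a=N^*Na$.

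For the converse, take $a\in\Dom(H_\mu)$. Then $a\in\Dom(N)$, and the functional $b\mapsto\ip{Na}{Nb}=\ip{H_\mu a}{b}$ is $\ell^2$-bounded on $\Dom(N)$. By the definition of the adjoint, $Na\in\Dom(N^*)$ and $N^*Na=H_\mu a$. Hence $H_\mu=N^*N$. Alternatively, one may simply note that $N^*N\subset H_\mu$ and that both operators are self-adjoint, which forces equality.

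It remains to describe $N^*$ concretely, and this is where Lemma~\ref{lem:adjoint} enters. Since $N=\overline{N_0}$, we have $N^*=\overline{N_0}^{\,*}=N_0^*$. Lemma~\ref{lem:adjoint} therefore identifies $\Dom(N^*)$ as the set of $f\in L^2(\mu)$ with $\bigl(\int f(s)n^{-s}\,d\mu(s)\bigr)_{n\ge1}\in\ell^2$. On this set, $N^*$ acts by $(N^*f)_n=\int f(s)n^{-s}\,d\mu(s)$.

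Take $f=Na=\ds a$, using $N=N_{\max}$ and \eqref{eq:concentration} so that the integrals live on $(1/2,\infty)$. This gives \eqref{eq:operator-domain} and \eqref{eq:operator-action}. The only point requiring care is the identity $\overline{N_0}^{\,*}=N_0^*$, which is standard, and the fact that each scalar integral converges absolutely. The latter is already guaranteed by the bound in the proof of Lemma~\ref{lem:adjoint}.

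No step is a genuine obstacle. The substantive input, $N=N_{\max}$, is Theorem~\ref{thm:synthesis}, and it is what allows us to write the action in terms of the Dirichlet series $\ds a$ rather than an abstract closure.
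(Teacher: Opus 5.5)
Your proposal is correct and follows essentially the paper's argument: the representation theorem gives $\ip{Na}{Nb}=\ip{H_\mu a}{b}$, which puts $Na$ in $\Dom(N^*)=\Dom(N_0^*)$ with $N^*Na=H_\mu a$, and Lemma~\ref{lem:adjoint} then gives the concrete domain and action. The only difference is cosmetic: the paper tests only against the basis vectors $e_n$ and uses Lemma~\ref{lem:adjoint} to get $Na\in\Dom(N^*)$, proving $H_\mu\subset N^*N$ and concluding by self-adjointness, whereas you test against all $b\in\Dom(N)$ and verify both inclusions directly.
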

\begin{proof}
Let $a\in\Dom(H_\mu)$. Then $a\in\Dom(h)=\Dom(N)$.
Since $e_n\in c_{00}\subset\Dom(h)$, the representation theorem
\eqref{eq:representation-action} and Theorem~\ref{thm:synthesis} give
\[
 \begin{aligned}
 (H_\mu a)_n
 &=\ip{H_\mu a}{e_n}_{\ell^2}=h[a,e_n]\\
 &=\ip{Na}{Ne_n}_{L^2(\mu)}
 =\int_{1/2}^\infty(N a)(s)n^{-s}\,d\mu(s).
 \end{aligned}
\]
The sequence of these integrals is $H_\mu a\in\ell^2$. Since $N^*=N_0^*$,
Lemma~\ref{lem:adjoint} implies that
\[
 Na\in\Dom(N^*),\qquad N^*Na=H_\mu a.
\]
Consequently,
\[
 H_\mu\subset N^*N.
\]
Both operators are self-adjoint, hence $H_\mu=N^*N$.

Therefore,
\[
 \Dom(H_\mu)=\{a\in\Dom(N):Na\in\Dom(N^*)\}.
\]
Lemma~\ref{lem:adjoint} now gives \eqref{eq:operator-domain}, while
\eqref{eq:operator-action} has already been established.
\end{proof}

\subsection{An operator core}
We shall prove that the subspace
\[
 \D_0=\left\{a\in c_{00}:\sum_{n\ge1}\frac{a_n}{\sqrt n}=0\right\}
\]
is an operator core for $H_\mu$. Its defining condition means that the Dirichlet
polynomial $\ds a$ vanishes at $s=1/2$. We have
\begin{equation*}
 \D_0=\operatorname{span}\{v_q:q\ge2\},\qquad v_q=e_1-\sqrt q\,e_q.
\end{equation*}
Indeed, $v_q\in\D_0$, and every $a\in\D_0$ can be written as
\[
 a=-\sum_{q\ge2}\frac{a_q}{\sqrt q}\,v_q.
\]
Moreover, $\D_0$ is dense in $\ell^2$: if $g\in\ell^2$ is orthogonal to every
$v_q$, then $g_q=g_1/\sqrt q$ for $q\ge2$. Since $\sum_{q\ge1}q^{-1}=\infty$,
this implies $g=0$.

For $t>0$, define the positive measure
\begin{equation*}
 d\mu_t(s)=q_t(s)\,d\mu(s),\qquad q_t(s)=1-e^{-t(s-1/2)},
\end{equation*}
and its moment sequence
\[
 \beta_t(n)=\int_{1/2}^\infty n^{-s}\,d\mu_t(s),\qquad n\ge1.
\]
We also set $q_0=0$. Let
\[
 \mathcal H_{\mu_t}=\bigl(\beta_t(mn)\bigr)_{m,n\ge1}
\]
be the corresponding Helson matrix. For $a,b\in c_{00}$,
\[
 \sum_{m,n\ge1}\beta_t(mn)a_n\overline{b_m}
 =\int_{1/2}^\infty(\ds a)(s)\overline{(\ds b)(s)}\,d\mu_t(s).
\]
By \eqref{eq:evaluation},
\[
 \begin{aligned}
 \left|\sum_{m,n\ge1}\beta_t(mn)a_n\overline{b_m}\right|
 &\le\norm a_{\ell^2}\norm b_{\ell^2}
       \int_{1/2}^\infty q_t(s)\zeta(2s)\,d\mu(s)\\
 &\le\norm a_{\ell^2}\norm b_{\ell^2}
       \int_{1/2}^\infty\left(q_t(s)+\frac{q_t(s)}{2s-1}\right)\,d\mu(s)\\
 &\le(1+t/2)\mu(\R)\norm a_{\ell^2}\norm b_{\ell^2}.
 \end{aligned}
\]
Thus $\mathcal H_{\mu_t}$ defines a bounded positive operator on $\ell^2$, with
\begin{equation}\label{eq:regular-bound}
 \norm{\mathcal H_{\mu_t}}
 \le\int_{1/2}^\infty\zeta(2s)\,d\mu_t(s)
 \le(1+t/2)\mu(\R).
\end{equation}
The same estimate shows that $a\mapsto\ds a$ is bounded from $\ell^2$ to
$L^2(\mu_t)$. Passing to the limit from finitely supported sequences therefore
gives, for $g\in\ell^2$ and $m\ge1$,
\begin{equation}\label{eq:regular-action}
 \begin{aligned}
 (\mathcal H_{\mu_t}g)_m
 &=\sum_{n\ge1}\beta_t(mn)g_n\\
 &=\int_{1/2}^\infty(\ds g)(s)m^{-s}\,d\mu_t(s).
 \end{aligned}
\end{equation}

\begin{lemma}\label{lem:core-action}
For every integer $q\ge2$,
\[
 v_q\in\Dom(H_\mu),\qquad H_\mu v_q=\mathcal H_{\mu_{\log q}}e_1.
\]
Consequently, $\D_0\subset\Dom(H_\mu)$.
\end{lemma}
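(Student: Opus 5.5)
The plan is to use the description of $H_\mu$ in Proposition~\ref{prop:factor}: $v_q\in\Dom(H_\mu)$ exactly when $v_q\in\Dom(N)$ and the sequence $\bigl(\int_{1/2}^\infty(\ds v_q)(s)n^{-s}\,d\mu(s)\bigr)_{n\ge1}$ belongs to $\ell^2$. In that case this sequence is $H_\mu v_q$. Since $v_q\in c_{00}\subset\Dom(N)$, only the second condition needs checking.

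First I compute the Dirichlet polynomial of $v_q$:
\[
 (\ds v_q)(s)=1-\sqrt q\,q^{-s}=1-e^{-(s-1/2)\log q}=q_{\log q}(s).
\]
Thus $\ds v_q$ is exactly the cutoff function $q_t$ with $t=\log q$. This is the key observation, and it explains the choice of $v_q$. Substituting into \eqref{eq:operator-action} gives, for $n\ge1$,
\[
 \int_{1/2}^\infty(\ds v_q)(s)n^{-s}\,d\mu(s)=\int_{1/2}^\infty n^{-s}q_{\log q}(s)\,d\mu(s)=\beta_{\log q}(n).
\]
The integral converges absolutely because $0\le q_{\log q}\le1$ and $\alpha(n)<\infty$.

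Next I identify this sequence with $\mathcal H_{\mu_{\log q}}e_1$. Apply \eqref{eq:regular-action} with $g=e_1$. Since $\ds e_1\equiv1$, it gives
\[
 (\mathcal H_{\mu_{\log q}}e_1)_n=\int_{1/2}^\infty n^{-s}\,d\mu_{\log q}(s)=\beta_{\log q}(n).
\]
Equivalently, this is the first column $\beta_{\log q}(n\cdot1)$ of the Helson matrix. Because $\mathcal H_{\mu_{\log q}}$ is bounded on $\ell^2$ by \eqref{eq:regular-bound}, this column lies in $\ell^2$. So the $\ell^2$ condition in \eqref{eq:operator-domain} holds, and therefore $v_q\in\Dom(H_\mu)$ with $H_\mu v_q=\mathcal H_{\mu_{\log q}}e_1$.

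Finally, $\D_0=\operatorname{span}\{v_q:q\ge2\}$ and $\Dom(H_\mu)$ is a linear subspace, so $\D_0\subset\Dom(H_\mu)$. There is no real obstacle here. The only point to verify carefully is the square-summability of $(\beta_{\log q}(n))_n$, and it follows at once from the bound $\norm{\mathcal H_{\mu_t}}\le(1+t/2)\mu(\R)$. If one wants a direct argument instead, use \eqref{eq:evaluation} and the estimate $q_t(s)\zeta(2s)\le q_t(s)\bigl(1+\tfrac1{2s-1}\bigr)\le 1+t/2$.
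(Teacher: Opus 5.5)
Your proposal is correct and follows the paper's own argument essentially step for step. The identity $\ds v_q=q_{\log q}$ turns the sequence in \eqref{eq:operator-domain} into the first column $\mathcal H_{\mu_{\log q}}e_1\in\ell^2$, so Proposition~\ref{prop:factor} gives both $v_q\in\Dom(H_\mu)$ and the formula for $H_\mu v_q$, and the span argument then yields $\D_0\subset\Dom(H_\mu)$.
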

\begin{proof}
Since $v_q\in c_{00}\subset\Dom(N)$ and
\[
 (\ds v_q)(s)=1-q^{1/2-s}=q_{\log q}(s),
\]
we have
\[
 \left(\int_{1/2}^\infty(\ds v_q)(s)n^{-s}\,d\mu(s)\right)_{n\ge1}
 =\mathcal H_{\mu_{\log q}}e_1\in\ell^2.
\]
Proposition~\ref{prop:factor} gives the assertion.
\end{proof}

\begin{proof}[Proof of Theorem~\ref{thm:core-main}]
The operator $T:=H_\mu|_{\D_0}$ is densely defined, symmetric and nonnegative.
It suffices to prove
\[
 \ker(T^*+I)=\{0\}.
\]

Let $g\in\ker(T^*+I)$, so that $T^*g=-g$. By Lemma~\ref{lem:core-action} and
self-adjointness of $\mathcal H_{\mu_{\log q}}$, for every integer $q\ge2$,
\begin{equation*}
 \begin{aligned}
 (\mathcal H_{\mu_{\log q}}g)_1
 &=\ip g{Tv_q}_{\ell^2}\\
 &=\ip{T^*g}{v_q}_{\ell^2}=\sqrt q\,g_q-g_1.
 \end{aligned}
\end{equation*}
Define
\begin{equation*}
 \begin{aligned}
 u(t)&=g_1+(\mathcal H_{\mu_t}g)_1\\
 &=g_1+\int_{1/2}^\infty q_t(s)(\ds g)(s)\,d\mu(s),\qquad t>0,
 \end{aligned}
\end{equation*}
and put $u(0)=g_1$.
Then
\begin{equation*}
 u(\log n)=\sqrt n\,g_n,\qquad n\ge1,
\end{equation*}
and hence
\begin{equation*}
 \sum_{n\ge1}\frac{|u(\log n)|^2}{n}=\norm g^2.
\end{equation*}
For $v>0$,
\[
 u'(v)=\int_{1/2}^\infty
 (s-1/2)e^{-v(s-1/2)}(\ds g)(s)\,d\mu(s),
\]
for $v\ge t>0$, the Cauchy--Schwarz inequality gives
\begin{equation*}
 \begin{aligned}
 |u'(v)|^2
 &\le\left(\int_{1/2}^\infty q_1(s)|(\ds g)(s)|^2\,d\mu(s)\right)\left(\int_{1/2}^\infty
       \frac{(s-1/2)^2e^{-2v(s-1/2)}}{q_1(s)}\,d\mu(s)\right)\\
 &\le\ip{\mathcal H_{\mu_1}g}g_{\ell^2}\,\mu(\R)
       \sup_{r>0}\frac{r^2e^{-2tr}}{1-e^{-r}}\\
 &\le\ip{\mathcal H_{\mu_1}g}g_{\ell^2}\,\mu(\R)
       \left(\frac1{2et}+\frac1{e^2t^2}\right).
 \end{aligned}
\end{equation*}
The last inequality follows from
\[
 \frac{r^2e^{-2tr}}{1-e^{-r}}\le(r+r^2)e^{-2tr},
 \qquad
 \sup_{r>0}re^{-2tr}=\frac1{2et},
 \qquad
 \sup_{r>0}r^2e^{-2tr}=\frac1{e^2t^2}.
\]
Consequently,
\[
 M_t:=\sup_{v\ge t}|u'(v)|<\infty,\qquad t>0,
\]
and $M_t\to0$ as $t\to\infty$.

We next prove that $u\in L^2(0,\infty)$. Put
\[
 \Delta_n=\log(n+1)-\log n.
\]
For $n\ge3$ and $v\in[\log n,\log(n+1)]$,
\[
 \begin{aligned}
 |u(v)|
 &\le|u(\log n)|+\int_{\log n}^v|u'(w)|\,dw\\
 &\le|u(\log n)|+M_1(v-\log n)\\
 &\le|u(\log n)|+M_1\Delta_n.
 \end{aligned}
\]
Thus
\[
 |u(v)|^2\le2|u(\log n)|^2+2M_1^2\Delta_n^2,
\]
and integration gives
\[
 \int_{\log n}^{\log(n+1)}|u(v)|^2\,dv
 \le2\Delta_n|u(\log n)|^2+2M_1^2\Delta_n^3.
\]
Summing over $n\ge3$ and using $\Delta_n\le1/n$, we obtain
\[
 \begin{aligned}
 \int_{\log3}^\infty|u(v)|^2\,dv
 &\le2\sum_{n\ge3}\frac{|u(\log n)|^2}{n}
       +2M_1^2\sum_{n\ge3}\frac1{n^3}\\
 &\le2\norm g^2+2M_1^2\sum_{n\ge3}\frac1{n^3}<\infty.
 \end{aligned}
\]
For $0<t\le\log3$, \eqref{eq:regular-bound} yields
\[
 \begin{aligned}
 |u(t)|&\le|g_1|+\norm{\mathcal H_{\mu_t}}\norm g\\
 &\le|g_1|+(1+\tfrac12\log3)\mu(\R)\norm g.
 \end{aligned}
\]
Hence $u$ is bounded on $[0,\log3]$, and therefore $u\in L^2(0,\infty)$.

\begin{samepage}
For $t>0$ and $n\ge1$, the identity
\[
 q_{t+\log n}(s)=q_{\log n}(s)+n^{-(s-1/2)}q_t(s)
\]
gives
\begin{equation*}
 \begin{aligned}
 u(t+\log n)
 &=u(\log n)+\int_{1/2}^\infty
       n^{-(s-1/2)}q_t(s)(\ds g)(s)\,d\mu(s)\\
 &=\sqrt n\bigl(g_n+(\mathcal H_{\mu_t}g)_n\bigr).
 \end{aligned}
\end{equation*}
\end{samepage}
It follows that
\[
 \norm{g+\mathcal H_{\mu_t}g}^2
 =\sum_{n\ge1}\frac{|u(t+\log n)|^2}{n}.
\]
Positivity of $\mathcal H_{\mu_t}$ implies
\[
 \begin{aligned}
 \norm{g+\mathcal H_{\mu_t}g}^2
 &=\norm g^2+2\ip{\mathcal H_{\mu_t}g}g_{\ell^2}
       +\norm{\mathcal H_{\mu_t}g}^2\\
 &\ge\norm g^2.
 \end{aligned}
\]
It remains to show that the preceding sum tends to zero.

For $t\ge1$ and $v\in[t+\log n,t+\log(n+1)]$,
\[
 \begin{aligned}
 |u(t+\log n)|
 &\le|u(v)|+\int_{t+\log n}^v|u'(w)|\,dw\\
 &\le|u(v)|+M_t(v-t-\log n)\\
 &\le|u(v)|+M_t\Delta_n.
 \end{aligned}
\]
Therefore
\[
 |u(t+\log n)|^2\le2|u(v)|^2+2M_t^2\Delta_n^2.
\]
Integrating over this interval, we obtain
\[
 \begin{aligned}
 \Delta_n|u(t+\log n)|^2
 &=\int_{t+\log n}^{t+\log(n+1)}|u(t+\log n)|^2\,dv\\
 &\le2\int_{t+\log n}^{t+\log(n+1)}|u(v)|^2\,dv
       +2M_t^2\Delta_n^3.
 \end{aligned}
\]
Since $1/n\le2\Delta_n$,
\[
 \frac{|u(t+\log n)|^2}{n}
 \le4\int_{t+\log n}^{t+\log(n+1)}|u(v)|^2\,dv+4M_t^2\Delta_n^3.
\]
Summing over $n\ge1$ gives
\begin{equation*}
 \begin{aligned}
 \norm{g+\mathcal H_{\mu_t}g}^2
 &\le4\int_t^\infty|u(v)|^2\,dv+4M_t^2\sum_{n\ge1}\Delta_n^3\\
 &\longrightarrow0\qquad(t\to\infty).
 \end{aligned}
\end{equation*}
Thus $\norm g^2=0$, proving essential self-adjointness of $T$. Since $H_\mu$ is
a self-adjoint extension of $T$, we conclude that $\overline T=H_\mu$.
Hence $\D_0$ is an operator core for $H_\mu$.
\end{proof}

\subsection{Bounded approximation}
The following theorem characterizes $\Dom(H_\mu)$ in terms of the bounded Helson
matrices  $\mathcal H_{\mu_t}$ and shows that $\mathcal H_{\mu_t}g\to H_\mu g$
for every $g\in\Dom(H_\mu)$.

\begin{theorem}\label{thm:regular-domain}
For every $g\in\ell^2$,
\begin{equation}\label{eq:domain-main}
 \begin{split}
 g\in\Dom(H_\mu)&\iff\sup_{t>0}\norm{\mathcal H_{\mu_t}g}<\infty\\
 &\iff\mathcal H_{\mu_t}g\text{ converges in }\ell^2\text{ as }t\to\infty.
 \end{split}
\end{equation}
For $g\in\Dom(H_\mu)$, the limit in \eqref{eq:domain-main} equals $H_\mu g$.
\end{theorem}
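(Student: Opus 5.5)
Since $\mathcal H_{\mu_t}=N^*Q_tN$ in a suitable sense, where $Q_t$ is multiplication by $q_t$, the natural tool is functional calculus for $H_\mu=N^*N$. The plan is to prove the chain of implications (i) $g\in\Dom(H_\mu)\Rightarrow$ convergence with limit $H_\mu g$, (ii) convergence $\Rightarrow$ boundedness, and (iii) boundedness $\Rightarrow g\in\Dom(H_\mu)$. Step (ii) is trivial, since a convergent family $t\mapsto\mathcal H_{\mu_t}g$ is bounded for large $t$ and \eqref{eq:regular-bound} bounds it for small $t$.

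For (i), take $g\in\Dom(H_\mu)$. By Proposition~\ref{prop:factor} and \eqref{eq:regular-action},
\[
(\mathcal H_{\mu_t}g)_m=\int_{1/2}^\infty q_t(s)(\ds g)(s)m^{-s}\,d\mu(s)=(N_0^*(q_tNg))_m .
\]
Here $q_tNg\in\Dom(N_0^*)$, as shown in the proof of Theorem~\ref{thm:synthesis}. That proof also shows that $q_Nf\to f$ in the graph norm of $N_0^*$ for every $f\in\Dom(N_0^*)$. Its estimate works for real $t\to\infty$, not only integer $N$: it uses the tail $\int_t^\infty|Lf|^2$ and dominated convergence. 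Applying this with $f=Ng\in\Dom(N^*)$ gives $\mathcal H_{\mu_t}g=N^*(q_tNg)\to N^*Ng=H_\mu g$ in $\ell^2$.

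For (iii), suppose $C:=\sup_t\norm{\mathcal H_{\mu_t}g}<\infty$. I would use the characterization recalled after \eqref{eq:representation-action}: $g\in\Dom(H_\mu)$ iff $g\in\Dom(h)$ and $b\mapsto h[g,b]$ is $\ell^2$-bounded on the form core $c_{00}$.

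First I show $g\in\Dom(N)$. Indeed $\ip{\mathcal H_{\mu_t}g}{g}=\int q_t|\ds g|^2\,d\mu\le C\norm g$, and $q_t\uparrow1$ pointwise on $(1/2,\infty)$. Monotone convergence then gives $\int|\ds g|^2d\mu\le C\norm g$, so $g\in\Dom(N_{\max})=\Dom(N)$ by Theorem~\ref{thm:synthesis}.

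Next, for $b\in c_{00}$,
\[
\ip{\mathcal H_{\mu_t}g}{b}=\int q_t\,\ds g\,\overline{\ds b}\,d\mu\to\int \ds g\,\overline{\ds b}\,d\mu=h[g,b]
\]
by dominated convergence, since $\ds g,\ds b\in L^2(\mu)$. Hence $|h[g,b]|\le C\norm b$. This bound extends from the core $c_{00}$ to all of $\Dom(h)$, because $h[g,\cdot]$ is continuous in the form norm. Therefore $g\in\Dom(H_\mu)$, and by (i) the limit equals $H_\mu g$.

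The main obstacle is (i): one must ensure the graph-norm convergence $q_tf\to f$ for $N_0^*$ holds for continuous $t$. I expect this to follow directly from \eqref{eq:graph-equivalence} via the translation identity $L(q_tf)(\tau)=Lf(\tau)-Lf(\tau+t)$, which gives $\norm{L(f-q_tf)}^2=\int_t^\infty|Lf|^2\to0$.
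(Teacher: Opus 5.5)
Your proof is correct. Steps (i) and (ii) match the paper's argument. The paper writes $H_\mu g-\mathcal H_{\mu_t}g=N^*f_t$ with $f_t=(1-q_t)Ng$, and combines the translation identity $(Lf_t)(r)=(Lf)(r+t)$ with \eqref{eq:graph-equivalence}, exactly as you propose; it then obtains boundedness for small $t$ from \eqref{eq:regular-bound}.

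The genuine difference is the converse, boundedness $\Rightarrow g\in\Dom(H_\mu)$.

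\textbf{The paper's route.} It puts $T=H_\mu|_{\D_0}$ and applies the already proved convergence to each $v\in\D_0\subset\Dom(H_\mu)$. This gives $|\ip{g}{Tv}|=\lim_t|\ip{\mathcal H_{\mu_t}g}{v}|\le C\norm v$, so $g\in\Dom(T^*)$. It then invokes Theorem~\ref{thm:core-main} to get $T^*=H_\mu$. The paper has to test on $\D_0$ rather than on $c_{00}$, because $c_{00}\not\subset\Dom(H_\mu)$ in general (e.g.\ $e_1\notin\Dom(H_\mu)$ when $(\alpha(n))\notin\ell^2$). That is what forces the appeal to essential self-adjointness.

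\textbf{Your route.} You stay at the level of the closed form. Monotone convergence in $\ip{\mathcal H_{\mu_t}g}{g}=\int q_t|\ds g|^2\,d\mu$ places $g$ in $\Dom(N_{\max})=\Dom(h)$. Dominated convergence gives $|h[g,b]|\le C\norm b$ on the form core $c_{00}$. The representation theorem then finishes the argument. The identity for general $g\in\ell^2$ follows from \eqref{eq:regular-action} and the boundedness of $a\mapsto\ds a$ into $L^2(\mu_t)$ recorded just before it.

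\textbf{What each buys.} Your argument needs only Theorem~\ref{thm:synthesis} and the first representation theorem, not the operator-core result. So Theorem~\ref{thm:regular-domain} could be proved before, and independently of, Theorem~\ref{thm:core-main}. As a by-product you get the form-level analogue: $g\in\Dom(h)\iff\sup_t\ip{\mathcal H_{\mu_t}g}{g}<\infty$, with $h[g,g]=\lim_{t\to\infty}\ip{\mathcal H_{\mu_t}g}{g}$. The paper's version is shorter once Theorem~\ref{thm:core-main} is available, and it serves as an application of that core.
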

\begin{proof}
Let $g\in\Dom(H_\mu)$ and put $f=Ng$. By \eqref{eq:factorization} and
\eqref{eq:graph-equivalence}, $f\in\Dom(N^*)=\Dom(L)$. For $t>0$, set
\[
 f_t(s)=e^{-t(s-1/2)}f(s).
\]
The dominated convergence theorem gives $f_t\to0$ in $L^2(\mu)$. Moreover,
\[
 (Lf_t)(r)=(Lf)(r+t),\qquad r\ge0,
\]
so that $f_t\in\Dom(L)=\Dom(N^*)$ and
\[
 \norm{Lf_t}_{L^2(0,\infty)}^2=\int_t^\infty|Lf(v)|^2\,dv\longrightarrow0.
\]
It follows from \eqref{eq:graph-equivalence} that
\[
 \norm{N^*f_t}\longrightarrow0.
\]
The coordinate formulas \eqref{eq:operator-action} and \eqref{eq:regular-action}
imply
\[
 H_\mu g-\mathcal H_{\mu_t}g=N^*f_t.
\]
Thus $\mathcal H_{\mu_t}g\to H_\mu g$ in $\ell^2$ as $t\to\infty$.

Conversely, suppose
\[
 \sup_{t>0}\norm{\mathcal H_{\mu_t}g}\le C.
\]
Put $T=H_\mu|_{\D_0}$. For $v\in\D_0$, the convergence established above and
self-adjointness of $\mathcal H_{\mu_t}$ give
\[
 |\ip g{Tv}|=\lim_{t\to\infty}|\ip{\mathcal H_{\mu_t}g}v|\le C\norm v.
\]
Hence $g\in\Dom(T^*)$. Theorem~\ref{thm:core-main} gives $T^*=H_\mu$, and therefore
$g\in\Dom(H_\mu)$.

Finally, if $\mathcal H_{\mu_t}g$ converges in $\ell^2$ as $t\to\infty$, its norm is
bounded for sufficiently large $t$. The estimate \eqref{eq:regular-bound} gives
boundedness on every interval $0<t\le R$, so that
$\sup_{t>0}\norm{\mathcal H_{\mu_t}g}<\infty$.
\end{proof}

\subsection{Trace class and the maximal matrix realization}\label{subsec:matrix}
We first record a trace-class criterion. The diagonal entries $\alpha(n^2)$ play
the role of the even moments in \cite[Theorem~3.1]{BS22}. A related positive
diagonal criterion appears in \cite[Appendix, Proposition~A(i)]{CSS26}.

\begin{samepage}
\begin{theorem}\label{thm:trace}
Let $\mu$ be a finite positive Borel measure on $\R$ satisfying
\eqref{eq:moments}. The following conditions are equivalent:
\begin{enumerate}
\item\label{item:trace-weighted} $\displaystyle\sum_{n\ge1}\frac{\alpha(n)}{\sqrt n}<\infty$;
\item\label{item:trace-diagonal} $\displaystyle\sum_{n\ge1}\alpha(n^2)<\infty$;
\item\label{item:trace-integral} $\mu((-\infty,1/2])=0$ and
      $\displaystyle\int_{1/2}^\infty\zeta(2s)\,d\mu(s)<\infty$;
\item\label{item:trace-operator} The form $h_0$ is represented by a positive trace-class operator
$H_\mu$ on $\ell^2$.
\end{enumerate}
When these conditions hold,
\begin{equation*}
 \Tr H_\mu=\sum_{n\ge1}\alpha(n^2)=\int_{1/2}^\infty\zeta(2s)\,d\mu(s),
 \qquad\norm{H_\mu}\le\sum_{n\ge1}\frac{\alpha(n)}{\sqrt n}.
\end{equation*}
\end{theorem}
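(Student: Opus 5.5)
The plan is to pass through the Tonelli computation
\[
 \sum_{n\ge1}\frac{\alpha(n)}{\sqrt n}=\int_{\R}\zeta(s+1/2)\,d\mu(s),\qquad
 \sum_{n\ge1}\alpha(n^2)=\int_{\R}\zeta(2s)\,d\mu(s),
\]
valid for positive integrands with the convention $\zeta(x)=\infty$ for $x\le1$. From these, (\ref{item:trace-weighted}) and (\ref{item:trace-diagonal}) each force $\mu((-\infty,1/2])=0$. Indeed, for $s\le1/2$ the series $\sum n^{-s-1/2}$ and $\sum n^{-2s}$ diverge, so any mass there makes the integrals infinite. Hence (\ref{item:trace-diagonal})$\iff$(\ref{item:trace-integral}) is immediate.

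For (\ref{item:trace-weighted})$\iff$(\ref{item:trace-integral}), I compare $\zeta(s+1/2)$ and $\zeta(2s)$ on $(1/2,\infty)$. Both behave like $1+O(2^{-s})$ at infinity. Near $s=1/2$ they behave like $1/(s-1/2)$ and $1/(2s-1)=\tfrac12\cdot 1/(s-1/2)$ respectively. Since both are continuous on $(1/2,\infty)$, their ratio is bounded above and below, which gives the equivalence.

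For (\ref{item:trace-integral})$\Rightarrow$(\ref{item:trace-operator}), Theorem~\ref{thm:synthesis} applies, and $H_\mu=N^*N$ by Proposition~\ref{prop:factor}. Since $H_\mu\ge0$, I will compute
\[
 \Tr H_\mu=\sum_n\norm{H_\mu^{1/2}e_n}^2=\sum_n h[e_n,e_n]=\sum_n\norm{Ne_n}^2_{L^2(\mu)}=\sum_n\alpha(n^2),
\]
which is the sum of the diagonal entries. Here $e_n\in c_{00}\subset\Dom(h)=\Dom(H_\mu^{1/2})$, and the trace of a positive operator is given by the sum of $\norm{H^{1/2}e_n}^2$ over any orthonormal basis. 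This value is finite, so $H_\mu\in\TC$, and the trace formula follows from the Tonelli identity. In the sense of ``represented'', $H_\mu$ is a bounded operator with $\ip{H_\mu a}{b}=h_0[a,b]$ on $c_{00}$.

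For (\ref{item:trace-operator})$\Rightarrow$(\ref{item:trace-diagonal}), suppose a positive trace-class $H$ satisfies $\ip{Ha}{b}=h_0[a,b]$ for $a,b\in c_{00}$. Then $\alpha(n^2)=h_0[e_n,e_n]=\ip{He_n}{e_n}$, so $\sum_n\alpha(n^2)=\Tr H<\infty$.

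For the norm bound, I plan a Schur test with weights $w_n=n^{-1/2}$. I need the estimate
\[
 \sum_m\alpha(mn)\,m^{-1/2}\le C\,n^{-1/2},\qquad C=\sum_k\alpha(k)k^{-1/2}.
\]
Write $\sum_m\alpha(mn)m^{-1/2}=\int n^{-s}\zeta(s+1/2)\,d\mu(s)$. Using $n^{-s}=n^{-1/2}n^{-(s-1/2)}\le n^{-1/2}$ for $s>1/2$ and $n\ge1$, this is at most $n^{-1/2}\int\zeta(s+1/2)\,d\mu=n^{-1/2}C$. Schur's test then gives that the matrix $(\alpha(mn))$ is bounded with norm at most $C$, and the bounded form coincides with $h$ on $c_{00}$, so $\norm{H_\mu}\le C$. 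Alternatively, one can simply use $\norm{H_\mu}\le\Tr H_\mu\lesssim C$, but that does not give the sharp constant, so I will use Schur.

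The main obstacle is making precise what ``represented'' means in (\ref{item:trace-operator}). I also need to ensure that the closure of $h_0$ equals the form of the bounded operator. This holds because a bounded form on the dense set $c_{00}$ extends uniquely. With that in place the remaining steps are routine.
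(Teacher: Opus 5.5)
Your proof is correct. For the four equivalences and the trace formula it follows essentially the paper's route: the same two Tonelli identities, the same observation that divergence for $s\le1/2$ forces $\mu((-\infty,1/2])=0$, and the same diagonal computation $\sum_n\alpha(n^2)=\sum_n\ip{H_\mu e_n}{e_n}$ in both directions.

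There are two small differences. First, the paper compares $\zeta(s+1/2)$ and $\zeta(2s)$ through the explicit inequality $\zeta(2s)\le\zeta(s+1/2)\le3\zeta(2s)$ on $(1/2,\infty)$, rather than by continuity and endpoint asymptotics; both work. Second, for (3)$\Rightarrow$(4) the paper observes that $N_0$ is Hilbert--Schmidt on $c_{00}$, so $N=\overline{N_0}$ is an everywhere defined Hilbert--Schmidt operator and $H_\mu=N^*N$ is trace class. In your version $H_\mu$ is a priori unbounded, so the formula $\Tr H=\sum_n\norm{H^{1/2}e_n}^2$ needs one more line. Finiteness of the sum makes the closed operator $H_\mu^{1/2}$ bounded on the dense set $c_{00}$, hence everywhere defined and Hilbert--Schmidt.

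For the norm bound your Schur test is valid. However, your remark that $\norm{H_\mu}\le\Tr H_\mu$ does not give the sharp constant is mistaken. Since $n^{-2s}\le n^{-s-1/2}$ for $s>1/2$, one has $\zeta(2s)\le\zeta(s+1/2)$ pointwise, and hence $\norm{H_\mu}\le\Tr H_\mu=\int\zeta(2s)\,d\mu\le\int\zeta(s+1/2)\,d\mu=\sum_n\alpha(n)/\sqrt n$. This is exactly the paper's argument. It yields the stated constant and supplies the intermediate bound $\Tr H_\mu$ at no extra cost. The Schur test gains nothing here beyond showing boundedness of the form directly, without passing through trace class.
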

\end{samepage}
\begin{proof}
Tonelli's theorem gives
\[
 \begin{aligned}
 \sum_{n\ge1}\alpha(n^2)
 &=\int_{\R}\sum_{n\ge1}n^{-2s}\,d\mu(s),\\
 \sum_{n\ge1}\frac{\alpha(n)}{\sqrt n}
 &=\int_{\R}\sum_{n\ge1}n^{-(s+1/2)}\,d\mu(s).
 \end{aligned}
\]
Both inner series diverge for $s\le1/2$, so finiteness of either sum implies
\eqref{eq:concentration}. The first identity therefore proves the equivalence
of (\ref{item:trace-diagonal}) and (\ref{item:trace-integral}).

For $s>1/2$, comparison with the integrals of the corresponding decreasing
functions gives
\[
 \begin{aligned}
 \zeta(s+1/2)
 &\le1+\int_1^\infty x^{-s-1/2}\,dx
 =1+\frac{2}{2s-1},\\
 \zeta(2s)
 &\ge\int_1^\infty x^{-2s}\,dx
 =\frac{1}{2s-1}.
 \end{aligned}
\]
Using also $\zeta(2s)\ge1$, we obtain
\[
 \zeta(2s)\le\zeta(s+1/2)\le3\zeta(2s).
\]
Together with the second Tonelli identity, this proves the equivalence of
(\ref{item:trace-weighted}) and (\ref{item:trace-integral}).

If (\ref{item:trace-diagonal}) holds, then
\[
 \sum_{n\ge1}\norm{N_0e_n}_{L^2(\mu)}^2
 =\sum_{n\ge1}\alpha(n^2)<\infty.
\]
Hence $N_0$ extends to a Hilbert--Schmidt operator $N=\overline{N_0}$ and $H_\mu=N^*N$ is of trace
class, proving (\ref{item:trace-operator}). Conversely, (\ref{item:trace-operator}) gives
\[
 \sum_{n\ge1}\alpha(n^2)
 =\sum_{n\ge1}\ip{H_\mu e_n}{e_n}
 =\Tr H_\mu<\infty.
\]
This proves the remaining equivalence and the trace formula.

Finally, positivity of $H_\mu$ and the trace formula give
\[
 \norm{H_\mu}
 \le \Tr H_\mu
 =\int_{1/2}^\infty\zeta(2s)\,d\mu(s)
 \le\int_{1/2}^\infty\zeta(s+1/2)\,d\mu(s)
 =\sum_{n\ge1}\frac{\alpha(n)}{\sqrt n}.
\]
\end{proof}

Assume that $(\alpha(n))_{n\ge1}\in\ell^2$. The Helson matrix
\[
 \mathcal H_\mu=(\alpha(mn))_{m,n\ge1}
\]
has square-summable rows and columns, since
\[
 0\le\alpha(mn)\le\alpha(n),\qquad m,n\ge1.
\]
For every $a\in\ell^2$, the matrix product is therefore defined coordinatewise by
\begin{equation*}
 (\mathcal H_\mu a)_m=\sum_{n\ge1}\alpha(mn)a_n,\qquad m\ge1,
\end{equation*}
with absolute convergence of each series. We consider the operator
$(\mathcal H_\mu,D_{\max})$, where
\begin{equation*}
 D_{\max}=\{a\in\ell^2:\mathcal H_\mu a\in\ell^2\}.
\end{equation*}
The following result is the analogue of \cite[Theorem~3.8]{BS22} for Helson forms.

\begin{theorem}\label{thm:matrix}
If $(\alpha(n))_{n\ge1}\in\ell^2$, then the operator
$(\mathcal H_\mu,D_{\max})$ coincides with $H_\mu$.
\end{theorem}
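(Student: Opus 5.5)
First, I check that the hypothesis puts us in the setting of Theorem~\ref{thm:synthesis}. Since $(\alpha(n))\in\ell^2$, we have $\sqrt n\,\alpha(n)$ bounded along a subsequence only, so instead I argue directly: $\sum_n \alpha(n)^2<\infty$ forces $\mu((-\infty,1/2])=0$. Indeed, for $s\le 1/2$ we have $n^{-s}\ge n^{-1/2}$, so $\alpha(n)\ge n^{-1/2}\mu((-\infty,1/2])$, and $\sum_n 1/n=\infty$. Hence $H_\mu$, $N=N_{\max}$ and Proposition~\ref{prop:factor} are available.

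The plan is to prove the two inclusions $H_\mu\subset(\mathcal H_\mu,D_{\max})$ and $(\mathcal H_\mu,D_{\max})\subset H_\mu$.

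\emph{Step 1: $(\mathcal H_\mu,D_{\max})$ is symmetric-compatible with $\D_0$.} First I show that the formal matrix is the adjoint of something. For $a\in c_{00}$ we have $H_\mu a=\mathcal H_\mu a$ coordinatewise, by Proposition~\ref{prop:factor} and Fubini for finite sums. Since $\mathcal H_\mu$ has square-summable rows, for every $g\in\ell^2$ and every $a\in c_{00}$ the identity $\ip{\mathcal H_\mu g}{a}=\ip{g}{\mathcal H_\mu a}$ holds, with the left side a finite sum of absolutely convergent series. Here I use that $\alpha(mn)$ is a symmetric real matrix and that finite sums can be interchanged. Hence if $g\in D_{\max}$, then for $a\in\D_0\subset c_{00}$,
\[
 \ip{g}{H_\mu a}=\ip{\mathcal H_\mu g}{a},
\]
so $g\in\Dom(T^*)$ with $T=H_\mu|_{\D_0}$, and $T^*g=\mathcal H_\mu g$. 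By Theorem~\ref{thm:core-main}, $T^*=\overline T^{\,*}=H_\mu^*=H_\mu$. Therefore $(\mathcal H_\mu,D_{\max})\subset H_\mu$.

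\emph{Step 2: $H_\mu\subset(\mathcal H_\mu,D_{\max})$.} Let $g\in\Dom(H_\mu)$. I need $(H_\mu g)_m=\sum_n\alpha(mn)g_n$ for each $m$. This gives $\mathcal H_\mu g=H_\mu g\in\ell^2$, so $g\in D_{\max}$. By the representation theorem \eqref{eq:representation-action},
\[
 (H_\mu g)_m=h[g,e_m]=\ip{Ng}{Ne_m}_{L^2(\mu)}.
\]
It therefore suffices to show that $\ip{Ng}{Ne_m}$ equals $\ip{g}{N_0^*(m^{-s})}$, with $N_0^*(m^{-s})=(\alpha(mn))_n\in\ell^2$. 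Here $m^{-s}=Ne_m\in L^2(\mu)$, and the adjoint formula Lemma~\ref{lem:adjoint} gives exactly the sequence $(\alpha(mn))_n$, which lies in $\ell^2$ by assumption. So $Ne_m\in\Dom(N^*)$. Since $N=\overline{N_0}$ and $N^*=N_0^*$, we get $\ip{Ng}{Ne_m}=\ip{g}{N^*Ne_m}=\sum_n g_n\alpha(mn)$, which is absolutely convergent. Note that this holds for every $g\in\Dom(N)$.

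The main obstacle is the identification $T^*=H_\mu$ in Step 1, but it is exactly Theorem~\ref{thm:core-main} (essential self-adjointness on $\D_0$). Everything else consists of elementary interchanges justified by the square-summability of rows.
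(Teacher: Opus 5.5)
Your proof is correct and is essentially the paper's argument. Both rest on Theorem~\ref{thm:core-main} to identify the adjoint of $H_\mu$ restricted to finitely supported sequences with $H_\mu$, and then read off that adjoint as the maximal matrix operator. The paper does this in one stroke with $S=\mathcal H_\mu|_{c_{00}}$ and $(\mathcal H_\mu,D_{\max})=S^*=(\overline S)^*=H_\mu$. You instead use $T=H_\mu|_{\D_0}$ for $D_{\max}\subset\Dom(H_\mu)$, and the factorization $H_\mu=N^*N$ with $Ne_m\in\Dom(N^*)$ for the reverse inclusion, which needs no core result.

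Your preliminary check that $(\alpha(n))\in\ell^2$ forces $\mu((-\infty,1/2])=0$ is a correct addition that the paper leaves implicit. One small point: in Step~2, $(\alpha(mn))_{n\ge1}\in\ell^2$ follows from $\alpha(mn)\le\alpha(n)$ together with the hypothesis, not from the hypothesis alone.
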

\begin{proof}
Set $S=\mathcal H_\mu|_{c_{00}}$. Since each column of $\mathcal H_\mu$ belongs
to $\ell^2$, Proposition~\ref{prop:factor} gives
\[
 \qquad H_\mu e_n=\mathcal H_\mu e_n,\qquad n\ge1.
\]
Thus $S=H_\mu|_{c_{00}}$. By Theorem~\ref{thm:core-main}, $\D_0$ is an
operator core for $H_\mu$. Since
\[
 \D_0\subset c_{00}\subset\Dom(H_\mu),
\]
we have $\overline S=H_\mu$.

For $a\in\Dom(S^*)$,
\[
 (S^*a)_m=\ip a{Se_m}
 =\sum_{n\ge1}\alpha(mn)a_n
 =(\mathcal H_\mu a)_m,\qquad m\ge1.
\]
Hence $a\in D_{\max}$. Conversely, if $a\in D_{\max}$, then
\[
 \ip{Sb}a=\ip b{\mathcal H_\mu a},\qquad b\in c_{00},
\]
so $a\in\Dom(S^*)$ and $S^*a=\mathcal H_\mu a$. Therefore
\[
 (\mathcal H_\mu,D_{\max})=S^*=(\overline S)^*=H_\mu.
\]
\end{proof}

By Tonelli's theorem, the condition $(\alpha(n))_{n\ge1}\in\ell^2$ is equivalent to
\begin{equation*}
 \sum_{n\ge1}\alpha(n)^2
 =\iint_{(1/2,\infty)^2}\zeta(s+u)\,d\mu(s)\,d\mu(u)<\infty.
\end{equation*}
If $(\alpha(n))_{n\ge1}\notin\ell^2$, Proposition~\ref{prop:factor} gives
$e_1\notin\Dom(H_\mu)$.

\begin{example}\label{ex:gamma}
For $c>0$, let $\mu_c$ be the probability measure on $(1/2,\infty)$ given by
\[
 d\mu_c(s)=\frac{(s-1/2)^{c-1}e^{-(s-1/2)}}{\Gamma(c)}\,ds.
\]
Its moments are
\[
 \alpha_c(n)=\int_{1/2}^{\infty}n^{-s}\,d\mu_c(s)
 =\frac1{\sqrt n(1+\log n)^c},\qquad n\ge1.
\]
Let $h_c$ denote the closed Helson form corresponding to $\mu_c$, and let
$H_c=H_{\mu_c}$ be its associated positive self-adjoint operator. By
\cite[Theorem~5.1]{PP18a}, $H_c$ is bounded for $c\ge1$ and unbounded for
$0<c<1$. Theorem~\ref{thm:trace} shows that $H_c$ is of trace class if and only
if $c>1$. Moreover, $(\alpha_c(n))_{n\ge1}\in\ell^2$ if and only if $c>1/2$;
in this range, Theorem~\ref{thm:matrix} identifies $H_c$ with
$(\mathcal H_{\mu_c},D_{\max})$. At $c=1$, the matrix $\mathcal H_{\mu_1}$ is
the regularized multiplicative Hilbert matrix studied in
\cite{BPSSV16,PP18b}.

For $\mu=\mu_c$ with $0<c<1$, Theorem~\ref{thm:synthesis} and
Proposition~\ref{prop:factor} give
\[
 \Dom(h_c)=\left\{a\in\ell^2:
 \sum_{n\ge1}a_nn^{-s}\in L^2(\mu_c)\right\}
\]
and
\[
 \Dom(H_c)=\left\{a\in\Dom(h_c):
 \left(\int_{1/2}^{\infty}(\ds a)(s)m^{-s}\,d\mu_c(s)\right)_{m\ge1}
 \in\ell^2\right\}.
\]
For $d>1/2$, define $g_d\in\ell^2$ by
\[
 g_d(n)=\frac1{\sqrt n(1+\log n)^d},\qquad n\ge1.
\]
Then
\[
 g_d\in\Dom(h_c)\iff d>1-\frac c2,
\]
whereas
\[
 g_d\in\Dom(H_c)\iff c>\frac12\quad\text{and}\quad d>\frac32-c.
\]
Consequently, if $1/2<c<1$, then
\[
 g_d\in\Dom(h_c)\setminus\Dom(H_c)
 \iff 1-\frac c2<d\le\frac32-c.
\]
If $0<c\le1/2$, the same domain difference contains $g_d$ precisely when
$d>1-c/2$. Thus
\[
 \Dom(H_c)\subsetneq\Dom(h_c),\qquad 0<c<1,
\]
while both domains equal $\ell^2$ for $c\ge1$.
\end{example}
\appendix
\section{}
\label{app:hankel-maximal}

We give another proof of the maximal-domain description for
Hankel moment operators; see~\cite{BS22, Y16}.
The argument is analogous to the proof of Theorem~\ref{thm:synthesis}.

Let $\mu$ be a finite positive Borel measure concentrated on $(-1,1)$.
Consider the operator
\[
A_0:\ell^2(\mathbb N_0)\supset c_{00}(\mathbb N_0)
\longrightarrow L^2(\mu),\qquad
(A_0a)(x)=\sum_{n=0}^{\infty}a_nx^n,
\]
and its maximal realization
\[
\begin{aligned}
\operatorname{Dom}(A_{\max})
&=\left\{a\in\ell^2(\mathbb N_0):
\sum_{n=0}^{\infty}a_nx^n\in L^2(\mu)\right\},\\
(A_{\max}a)(x)&=\sum_{n=0}^{\infty}a_nx^n.
\end{aligned}
\]
For $a\in\ell^2(\mathbb N_0)$,
\[
\left|\sum_{n=0}^{\infty}a_nx^n\right|
\leq \frac{\|a\|_{\ell^2}}{\sqrt{1-x^2}},
\qquad -1<x<1.
\]
Thus convergence in $\ell^2$ implies pointwise convergence of the
corresponding power series on $(-1,1)$. If $a^{(j)}\to a$ in $\ell^2$
and $A_{\max}a^{(j)}\to g$ in $L^2(\mu)$, passage to a subsequence
converging $\mu$-almost everywhere gives
$g(x)=\sum_{n=0}^{\infty}a_nx^n$ almost everywhere.
Hence $A_{\max}$ is closed. Since $A_0\subset A_{\max}$, we have
$\overline{A_0}\subset A_{\max}$.

To prove the reverse inclusion, it suffices to verify that
\begin{equation}\label{eq:hankel-interchange}
\begin{aligned}
&\int_{-1}^{1}
\left(\sum_{n=0}^{\infty}a_nx^n\right)
\overline{f(x)}\,d\mu(x)\\
&\qquad=
\sum_{n=0}^{\infty}a_n
\int_{-1}^{1}x^n\overline{f(x)}\,d\mu(x),
\end{aligned}
\end{equation}
for $a\in\operatorname{Dom}(A_{\max})$ and
$f\in\operatorname{Dom}(A_0^*)$.
Fix such $a$ and $f$, let
\[
b_n=(A_0^*f)_n=\int_{-1}^{1}x^nf(x)\,d\mu(x),
\qquad n\geq0.
\]
so that $(b_n)_{n\geq0}\in\ell^2$, and let
\[
q_N(x)=1-x^{2N},\qquad N\geq1.
\]
Since $0\leq q_N(x)\leq1$ for $-1<x<1$ and
\[
\frac{q_N(x)^2}{1-x^2}
=(1-x^{2N})\sum_{j=0}^{N-1}x^{2j}\leq N,
\]
the Cauchy--Schwarz inequality and Tonelli's theorem give
\begin{align*}
&\sum_{n=0}^{\infty}|a_n|
\int_{-1}^{1}|x|^nq_N(x)|f(x)|\,d\mu(x)\\
&\quad\leq
\|a\|_{\ell^2}
\int_{-1}^{1}
\frac{q_N(x)|f(x)|}{\sqrt{1-x^2}}\,d\mu(x)\\
&\quad\leq
\sqrt{N\mu((-1,1))}\,
\|a\|_{\ell^2}\|f\|_{L^2(\mu)}<\infty.
\end{align*}
Consequently, the sum and the integral can be interchanged after
multiplication by $q_N$, and
\begin{equation}\label{eq:hankel-cutoff}
\begin{aligned}
&\int_{-1}^{1}
\left(\sum_{n=0}^{\infty}a_nx^n\right)
q_N(x)\overline{f(x)}\,d\mu(x)\\
&\qquad=
\sum_{n=0}^{\infty}a_n\overline{(b_n-b_{n+2N})}.
\end{aligned}
\end{equation}
Since $q_N(x)\to1$ for $-1<x<1$, dominated convergence theorem gives
$q_Nf\to f$ in $L^2(\mu)$. Moreover,
\[
\left|\sum_{n=0}^{\infty}a_n\overline{b_{n+2N}}\right|
\leq\|a\|_{\ell^2}
\left(\sum_{k=2N}^{\infty}|b_k|^2\right)^{1/2}
\longrightarrow0.
\]
Letting $N\to\infty$ in~\eqref{eq:hankel-cutoff}, we obtain
\eqref{eq:hankel-interchange}. Therefore
$A_{\max}\subset A_0^{**}=\overline{A_0}$, and hence
\[
\overline{A_0}=A_{\max}.
\]

\vspace{2em}
\subsection*{AI disclosure}
The authors used GPT-6 (OpenAI) to assist with this work. The  construction in Example~\ref{ex:counter} was provided by GPT-6. The methods used to prove the remaining results
were developed by the authors. The authors independently reviewed all
results and proofs and take full responsibility for the mathematical
content of the paper.

\vspace{2em}
\subsection*{Acknowledgment}
Y. Lu was supported by the National Natural Science Foundation of China
(Grant No. 12031002 and No. 12671149). C. Zu was supported by the National Natural Science Foundation of China (Grant No. 12401151).

\subsection*{Conflict of interest}
The authors declare no relevant conflicts of interest regarding this article. 
%The authors declare no conflicts of interest.
\subsection*{Data availability statement}
No data, models, or code were generated or used during the study. 
%Data sharing is not applicable to this article as no new data were created or analyzed in this study.

\bibliographystyle{amsplain}
\bibliography{ref}
\end{document}